\newcommand{\RR}{\mathbb{R}}
\newtheorem{theorem}{Theorem}[section]
\newtheorem{lemma}[theorem]{Lemma}
\newtheorem{proposition}[theorem]{Proposition}
\newtheorem{corollary}[theorem]{Corollary}
\newtheorem{example}[theorem]{Example}
\newtheorem{remark}[theorem]{Remark}
\newcommand{\spb}[1]{\smallskip}
\newcommand{\mpb}[1]{\medskip}
\newcommand{\bpb}[1]{\bigskip}
\renewcommand{\a}{\alpha}
\renewcommand{\b}{\beta}
\newcommand{\e}{\varepsilon}
\renewcommand{\d}{\delta}
\newcommand{\D}{\Delta}
\newcommand{\g}{\gamma}
\renewcommand{\l}{\lambda}
\renewcommand{\O}{\Omega}
\renewcommand{\o}{\omega}
\begin{document}
\DeclareGraphicsExtensions{.jpg,.pdf,.mps,.png}

\title{A new approximation to the geometric-arithmetic index}

\author[Alvaro Mart\'{\i}nez-P\'erez]{Alvaro Mart\'{\i}nez-P\'erez$^{(1)}$}
\address{ Facultad CC. Sociales de Talavera,
Avda. Real Fábrica de Seda, s/n. 45600 Talavera de la Reina, Toledo, Spain}
\email{alvaro.martinezperez@uclm.es}
\thanks{$^{(1)}$ Supported in part by a grant
from Ministerio de Econom{\'\i}a y Competitividad (MTM 2015-63612P), Spain.}

\author[Jos\'e M. Rodr{\'\i}guez]{Jos\'e M. Rodr{\'\i}guez$^{(2)}$}
\address{Departamento de Matem\'aticas, Universidad Carlos III de Madrid,
Avenida de la Universidad 30, 28911 Legan\'es, Madrid, Spain}
\email{jomaro@math.uc3m.es}
\thanks{$^{(2)}$ Supported in part by two grants
from Ministerio de Econom{\'\i}a y Competitividad (MTM 2016-78227-C2-1-P and MTM 2015-69323-REDT), Spain, and a grant from CONACYT (FOMIX-CONACyT-UAGro 249818), M\'exico.}

\author[Jos\'e M. Sigarreta]{Jos\'e M. Sigarreta$^{(2)}$}
\address{Faculdad de Matem\'aticas, Universidad Aut\'onoma de Guerrero, Carlos E. Adame 5, Col. La Garita, Acapulco, Gro., Mexico}
\email{jsmathguerrero@gmail.com}

\date{\today}

\begin{abstract}
The concept of geometric-arithmetic index was introduced in the
chemical graph theory recently, but it has shown to be useful.
The aim of this paper is to obtain new inequalities involving the geometric-arithmetic index $GA_1$
and characterize graphs extremal with respect to them.
\end{abstract}

\maketitle{}


{\it Keywords:  Geometric-arithmetic index, Graph invariant, Vertex-degree-based graph invariant, Topological index.}

{\it 2010 AMS Subject Classification numbers: 05C07, 92E10.} 

\section{Introduction}
A single number, representing a chemical structure in graph-theoretical terms via the
molecular graph, is called a topological descriptor and if it in addition correlates with
a molecular property it is called topological index, which is used to understand physicochemical
properties of chemical compounds.
Topological indices are interesting since they capture some of the properties of a molecule in a single number.
Hundreds of topological indices have been introduced and studied, starting with the
seminal work by Wiener in which he used the sum of all shortest-path distances of
a (molecular) graph for modeling physical properties of alkanes (see \cite{Wi}).

Topological indices based on end-vertex degrees of edges have been
used over 40 years. Among them, several indices are recognized to be useful tools in
chemical researches. Probably, the best know such descriptor is the Randi\'c connectivity
index ($R$) \cite{R}. There are more than thousand papers and a couple of books dealing with
this molecular descriptor (see, e.g., \cite{GF}, \cite{LG}, \cite{LS}, \cite{RS}, \cite{RS0} and the references therein).
During many years, scientists were trying to improve the predictive power of the
Randi\'c index. This led to the introduction of a large number of new topological
descriptors resembling the original Randi\'c index.
The first geometric-arithmetic index $GA_1$, defined in \cite{VF} as
$$
GA_1 = GA_1(G) = \sum_{uv\in E(G)}\frac{\sqrt{d_u d_v}}{\frac12 (d_u + d_v)}
$$
where $uv$ denotes the edge of the graph $G$ connecting the vertices $u$ and $v$, and
$d_u$ is the degree of the vertex $u$,
is one of the successors of the Randi\'c index.
Although $GA_1$ was introduced in $2009$, there are many papers dealing with this index
(see, e.g., \cite{Das10b}, \cite{DGF}, \cite{DGF2}, \cite{MH}, \cite{RS3}, \cite{S}, \cite{VF} and the references therein).
There are other geometric-arithmetic indices, like $Z_{p,q}$ ($Z_{0,1} = GA_1$), but the results in \cite[p.598]{DGF}
show that the $GA_1$ index gathers the
same information on observed molecule as other $Z_{p,q}$ indices.

The reason for introducing a new index is to gain prediction of target property (properties)
of molecules somewhat better than obtained by already presented indices. Therefore,
a test study of predictive power of a new index must be done. As a standard for
testing new topological descriptors, the properties of octanes are commonly used.
We can find 16 physico-chemical properties of octanes at www.moleculardescriptors.eu.

The $GA_1$ index gives better correlation coefficients than $R$ for these properties, but the differences between
them are not significant. However, the predicting ability of the $GA_1$ index compared with
Randi\'c index is reasonably better (see \cite[Table 1]{DGF}).
Although only about 1000 benzenoid hydrocarbons are known, the number of
possible benzenoid hydrocarbons is huge. For instance, the number of
possible benzenoid hydrocarbons with 35 benzene rings is $5.85\cdot 10^{21}$ \cite{NGJ}.
Therefore, the modeling of their physico-chemical properties is very important in order
to predict properties of currently unknown species.
The graphic in \cite[Fig.7]{DGF} (from \cite[Table 2]{DGF}, \cite{TRC}) shows
that there exists a good linear correlation between $GA_1$ and the heat of formation of benzenoid hydrocarbons
(the correlation coefficient is equal to $0.972$).

Furthermore, the improvement in
prediction with $GA_1$ index comparing to Randi\'c index in the case of standard
enthalpy of vaporization is more than 9$\%$. That is why one can think that $GA_1$ index
should be considered in the QSPR/QSAR researches.

The aim of this paper is to obtain new inequalities involving the geometric-arithmetic index $GA_1$
and characterize graphs extremal with respect to them.

Throughout this work, $G=(V (G),E (G))$ denotes a (nonoriented) finite simple (without multiple edges and loops) nontrivial ($E(G) \neq \emptyset$) graph.

\section{Some lower and upper bounds for $GA_1$}

If $G$ is a graph with $m$ edges, minimum degree $\delta$ and maximum degree $\Delta$, then in \cite{Das10b} (see also \cite{DGF}) we find the bounds:
\begin{equation}\label{eq: bound}
\frac{2m\sqrt{\delta \Delta}}{\delta+\Delta} \leq GA_1(G)\leq m.
\end{equation}

Let us recall Lemma 2.2 and Corollary 2.3 in \cite{RS2}.

\begin{lemma}\label{lema 1} Let $f$ be the function $f(t) = \frac{2t}{1+t^2}$ on the interval $[0,\infty)$. Then $f$ strictly
increases in $[0, 1]$, strictly decreases in $[1,\infty)$, $f(t) = 1$ if and only if $t = 1$ and $f(t) = f(t_0)$
if and only if either $t = t_0$ or $t = t_0^{-1}$.
\end{lemma}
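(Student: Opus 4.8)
The plan is to analyze the function $f(t) = \frac{2t}{1+t^2}$ using elementary single-variable calculus, since every claimed property follows from the behavior of its derivative and a symmetry observation. First I would compute $f'(t) = \frac{2(1+t^2) - 2t\cdot 2t}{(1+t^2)^2} = \frac{2(1-t^2)}{(1+t^2)^2}$ on $[0,\infty)$. The denominator is always positive, so the sign of $f'$ is governed entirely by the factor $1-t^2$. This gives $f'(t) > 0$ for $t \in (0,1)$, $f'(1) = 0$, and $f'(t) < 0$ for $t \in (1,\infty)$, which immediately yields strict monotone increase on $[0,1]$ and strict decrease on $[1,\infty)$, establishing the first two assertions.

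Next I would handle the characterization $f(t) = 1 \iff t = 1$. Since $f(1) = \frac{2}{2} = 1$, the forward-hard direction is to show no other $t$ achieves the value $1$. Rather than appeal to monotonicity, the cleanest route is algebraic: $f(t) = 1$ is equivalent to $2t = 1 + t^2$, i.e. $(t-1)^2 = 0$, forcing $t = 1$. This also foreshadows the final claim.

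For the level-set statement, I would start from the symmetry relation $f(t^{-1}) = \frac{2t^{-1}}{1 + t^{-2}} = \frac{2t}{t^2 + 1} = f(t)$, valid for all $t > 0$, so $t$ and $t^{-1}$ always share a value; this gives the ``if'' direction (the case $t_0 = 0$ being degenerate and handled separately, as $f(0)=0$ is attained only at $t=0$). For the ``only if'' direction, I would fix $t_0$ and solve $f(t) = f(t_0)$ directly: clearing denominators gives $2t(1+t_0^2) = 2t_0(1+t^2)$, which rearranges to $t_0 t^2 - (1+t_0^2)t + t_0 = 0$, a quadratic in $t$ that factors as $(t - t_0)(t_0 t - 1) = 0$, whose only roots are $t = t_0$ and $t = t_0^{-1}$.

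The main obstacle is essentially bookkeeping rather than any deep difficulty: one must keep the endpoint and degenerate cases straight (the behavior at $t = 0$, and ensuring the quadratic analysis does not silently assume $t_0 \neq 0$), and one must verify that the two descriptions of the solution set — via monotonicity on the two half-intervals and via the explicit factorization — are consistent. I expect the factorization $(t - t_0)(t_0 t - 1) = 0$ to be the decisive computation, since it simultaneously proves the ``only if'' direction of the last claim and recovers $t = 1$ as the unique fixed point when $t_0 = 1$.
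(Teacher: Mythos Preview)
Your argument is correct: the derivative computation gives the monotonicity, the algebraic identity $(t-1)^2=0$ handles the characterization $f(t)=1$, the symmetry $f(t^{-1})=f(t)$ gives the ``if'' direction, and the factorization $t_0 t^2-(1+t_0^2)t+t_0=(t-t_0)(t_0 t-1)$ settles the ``only if'' direction. The degenerate case $t_0=0$ is also fine since then $f(t_0)=0$ and $f(t)=0$ forces $t=0$.

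There is nothing to compare against: the paper does not prove this lemma but merely recalls it from \cite{RS2}. Your self-contained calculus-and-factorization argument is a perfectly good proof and is almost certainly the intended one; the only minor stylistic point is that once you have strict monotonicity on $[0,1]$ and on $[1,\infty)$ together with the symmetry $f(t)=f(t^{-1})$, the ``only if'' direction of the last claim already follows without solving the quadratic (any preimage of $f(t_0)$ must consist of exactly one point in $[0,1]$ and one in $[1,\infty)$, and $t_0,\,t_0^{-1}$ supply both), so the explicit factorization, while clean, is not strictly needed.
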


\begin{corollary} \label{cor} Let $g$ be the function $g(x, y) = \frac{2\sqrt{xy}}{x+y}$ with $0 < a\leq  x, y \leq b$. Then
\[\frac{2\sqrt{ab}}{a+b}\leq g(x,y)\leq 1.\]
The equality in the lower bound is attained if and only if either $x = a$ and $y = b$, or $x = b$
and $y = a$, and the equality in the upper bound is attained if and only if $x = y$.
\end{corollary}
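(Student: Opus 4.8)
The plan is to reduce this two-variable inequality to the one-variable statement of Lemma \ref{lema 1}. First I would introduce the single parameter $t=\sqrt{x/y}$, so that $x=t^2y$, and a short computation gives
\[
g(x,y)=\frac{2\sqrt{xy}}{x+y}=\frac{2ty}{(1+t^2)y}=\frac{2t}{1+t^2}=f(t).
\]
Thus $g$ is nothing but the function $f$ of Lemma \ref{lema 1} evaluated at $t=\sqrt{x/y}$, and the entire problem is transferred to the behaviour of $f$, which is already fully understood.

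Next I would determine the range of the new variable. Since $0<a\le x,y\le b$, the ratio $x/y$ lies in $[a/b,\,b/a]$, and hence $t=\sqrt{x/y}\in[\sqrt{a/b},\,\sqrt{b/a}]$; note that the two endpoints are reciprocals of each other and that $1$ always belongs to this interval. For the upper bound, Lemma \ref{lema 1} gives $f(t)\le f(1)=1$ for every $t$, with equality exactly when $t=1$. Translating back, $t=1$ means $x/y=1$, that is $x=y$, which yields $g(x,y)\le 1$ together with the stated equality case.

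For the lower bound I would use that, by Lemma \ref{lema 1}, $f$ strictly increases on $[\sqrt{a/b},1]$ and strictly decreases on $[1,\sqrt{b/a}]$, so on the interval $[\sqrt{a/b},\sqrt{b/a}]$ its minimum is attained at an endpoint. Because the endpoints are reciprocal, the characterization $f(t)=f(t_0)$ iff $t\in\{t_0,t_0^{-1}\}$ shows they yield the same value, namely
\[
f\big(\sqrt{a/b}\,\big)=\frac{2\sqrt{a/b}}{1+a/b}=\frac{2\sqrt{ab}}{a+b},
\]
so $g(x,y)\ge \frac{2\sqrt{ab}}{a+b}$, with equality forcing $t$ to equal one of the two endpoints.

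The remaining, and slightly delicate, step is to translate the equality condition back into conditions on $x$ and $y$: from $t=\sqrt{a/b}$ I get $x/y=a/b$, and combined with $a\le x,y\le b$ this forces $x=a$ and $y=b$ (and symmetrically $t=\sqrt{b/a}$ forces $x=b$ and $y=a$). I expect this last translation of the equality cases---ruling out spurious solutions of $x/y=a/b$ lying strictly inside the box---to be the only point requiring genuine care; everything else is a direct application of Lemma \ref{lema 1}.
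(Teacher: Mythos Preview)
Your argument is correct. The paper does not supply its own proof of this corollary---it is quoted from \cite{RS2}---but your reduction to Lemma~\ref{lema 1} via the substitution $t=\sqrt{x/y}$ is precisely the identity $\tfrac{2\sqrt{xy}}{x+y}=f(t)$ that the paper invokes immediately afterwards to obtain Lemma~\ref{lema 2}, so your approach is the intended one; your handling of the equality case (showing that $x/y=a/b$ together with $a\le x,y\le b$ forces $x=a$, $y=b$) is also sound.
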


The following lemma is a direct consequence of Lemma \ref{lema 1} and the fact that $\frac{2\sqrt{xy}}{x+y}= f(t)$ with $t = \sqrt{\frac{x}{y}}$.

\begin{lemma}\label{lema 2} For every $1\leq a< b$ and every $i\in \mathbb{N}$,
\[\frac{2\sqrt{a(a+i)}}{2a+i}< \frac{2\sqrt{b(b+i)}}{2b+i} .\]
\end{lemma}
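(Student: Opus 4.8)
The plan is to rewrite both sides of the claimed inequality as values of the single-variable function $f$ from Lemma \ref{lema 1}, and then invoke its monotonicity. Setting $x=a$ and $y=a+i$ in the identity $\frac{2\sqrt{xy}}{x+y}=f(t)$ with $t=\sqrt{x/y}$, the left-hand side becomes $f(t_a)$ with $t_a=\sqrt{a/(a+i)}$; likewise the right-hand side equals $f(t_b)$ with $t_b=\sqrt{b/(b+i)}$.

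First I would observe that, since $i\in\mathbb{N}$ satisfies $i\geq 1$, we have $a<a+i$ and $b<b+i$, so $0<t_a<1$ and $0<t_b<1$. Both arguments therefore lie in the interval $[0,1]$ on which, by Lemma \ref{lema 1}, the function $f$ strictly increases. This is the key structural point that lets a one-dimensional monotonicity statement control the two-dimensional expression.

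Next I would compare $t_a$ and $t_b$. Because the square root is increasing, it suffices to compare $a/(a+i)$ and $b/(b+i)$. Cross-multiplying by the positive quantities $(a+i)$ and $(b+i)$, the inequality $\frac{a}{a+i}<\frac{b}{b+i}$ is equivalent to $a(b+i)<b(a+i)$, that is, to $ai<bi$, which holds precisely because $a<b$ and $i\geq 1$. Hence $t_a<t_b$.

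Finally, applying the strict monotonicity of $f$ on $[0,1]$ to $t_a<t_b$ gives $f(t_a)<f(t_b)$, which is exactly the asserted inequality. The only points requiring care are confirming that both $t_a$ and $t_b$ sit in the increasing branch $[0,1]$ (so that Lemma \ref{lema 1} is applied with the correct sign) and using $i\geq 1$ to secure the strict, rather than merely non-strict, inequality; there is no genuine analytic obstacle beyond these bookkeeping checks.
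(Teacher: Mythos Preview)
Your proof is correct and follows exactly the approach the paper indicates: express each side as $f\bigl(\sqrt{x/y}\bigr)$ and invoke the strict monotonicity of $f$ on $[0,1]$ from Lemma~\ref{lema 1}. The paper states the lemma as a direct consequence of Lemma~\ref{lema 1} and the identity $\frac{2\sqrt{xy}}{x+y}=f(t)$ with $t=\sqrt{x/y}$, and your write-up simply fills in the details of that one-line justification.
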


Let $G$ be a graph with $n$ vertices, $m$ edges, minimum degree $\delta$ and maximum degree $\Delta$.
Let $k=\Delta-\delta$ and consider the partition of the vertices given by their degrees where $V_i$ is the set of vertices with degree $\delta+i$ for every $0\leq i \leq k$. Let $n_i$ be the number of vertices in $V_i$ and $m_{ij}$ be the number of edges joining a vertex in $V_i$ with a vertex in $V_j$. Then,
\begin{equation}\label{eq 1}
GA_1(G)=\sum_{\substack{i,j=0\\ i\leq j}}^k \frac{2 m_{ij} \sqrt{(\delta+i)(\delta+j)}}{2\delta+i+j}= \sum_{i=0}^k m_{ii} +\sum_{\substack{i,j=0\\ i< j}}^k \frac{2m_{ij}\sqrt{(\delta+i)(\delta+j)}}{2\delta+i+j}.
\end{equation}

Therefore, from this and Corollary \ref{cor} it is clear that $GA_1(G)=m$ if and only if all the edges are joining vertices with equal degree.
Hence, $GA_1(G)=m$ if and only if each connected component of $G$ is regular.

\smallskip

As usual, we use the convention
$$
\sum_{\ell\in \emptyset} a_\ell=0.
$$
Therefore, if $k=0$ (i.e., if $G$ is a regular graph), then the last sum in \eqref{eq 1} is equal to zero.

\smallskip

Let us assume $k=\Delta-\delta>0$ and let $n_i=|V_i|$ for every $0\leq i \leq k$.

\begin{proposition}
Let $G$ be a nontrivial graph with minimum degree $\delta$ and maximum degree $\Delta > \delta$. Then
$$
\begin{aligned}
GA_1(G)  &
\leq \sum_{i=0}^k \min\Big\{\frac{1}{2}n_i(\delta+i),\binom{n_i}{2}\Big\}+\sum_{\substack{i,j=0\\ i< j}}^k \frac{2n_i n_j\sqrt{(\delta+i)(\delta+j)}}{2\delta+i+j}
\\
& \leq \sum_{i=0}^k \min\Big\{\frac{1}{2}n_i(\delta+i),\binom{n_i}{2}\Big\}+\sum_{\substack{i,j=0\\ i< j}}^k \frac{2n_i n_j\sqrt{\Delta(\Delta-j+i)}}{2\Delta-j+i}.
\end{aligned}
$$
Furthermore, if $G$ is a connected graph, then we can replace in the previous inequalities $\frac{1}{2}n_i(\delta+i)$ by $\frac{1}{2}n_i(\delta+i)-1$.
\end{proposition}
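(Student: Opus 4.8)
The plan is to bound, term by term, the two sums in the expression \eqref{eq 1} for $GA_1(G)$. For each diagonal term I would bound $m_{ii}$, the number of edges with both endpoints in $V_i$, in two independent ways. First, summing degrees over $V_i$ gives the identity $n_i(\delta+i)=2m_{ii}+e_i$, where $e_i$ counts the edges joining $V_i$ to $V(G)\setminus V_i$; since $e_i\geq 0$ this yields $m_{ii}\leq \frac12 n_i(\delta+i)$. Second, as $G$ is simple the subgraph induced on $V_i$ has at most $\binom{n_i}{2}$ edges, so $m_{ii}\leq \binom{n_i}{2}$. Together these give $m_{ii}\leq \min\{\frac12 n_i(\delta+i),\binom{n_i}{2}\}$. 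For each off-diagonal term the analogous bound is $m_{ij}\leq n_in_j$ (the complete bipartite graph $K_{n_i,n_j}$ being the extremal case), and substituting this into the nonnegative coefficients of \eqref{eq 1} produces the first inequality.

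For the second inequality it suffices to prove, for each pair $i<j$, that
$$
\frac{2\sqrt{(\delta+i)(\delta+j)}}{2\delta+i+j}\leq \frac{2\sqrt{\Delta(\Delta-j+i)}}{2\Delta-j+i},
$$
and then to multiply by $n_in_j\geq 0$ and sum. This is exactly an instance of Lemma \ref{lema 2}: writing $n:=j-i\geq 1$ for the common gap, the left-hand side is $\frac{2\sqrt{a(a+n)}}{2a+n}$ with $a=\delta+i$, while the right-hand side is the same expression evaluated at $a=\Delta-j+i=\delta+k-j+i$ (indeed $(\Delta-j+i)+n=\Delta$). Since $a=\delta+i\geq 1$ because $\delta\geq 1$, and since $(\delta+i)\leq(\delta+k-j+i)$ is equivalent to $j\leq k$, which always holds, Lemma \ref{lema 2} gives the desired inequality, strict when $j<k$ and an equality when $j=k$.

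The delicate part, and the step I expect to be the main obstacle, is the refinement for connected graphs. Here $\Delta>\delta$ forces at least two classes $V_i$ to be nonempty, so every nonempty $V_i$ satisfies $V_i\subsetneq V(G)$; connectivity then guarantees $e_i\geq 1$, i.e.\ at least one edge leaves $V_i$. Feeding $e_i\geq 1$ into $n_i(\delta+i)=2m_{ii}+e_i$ improves the degree-sum bound to $2m_{ii}\leq n_i(\delta+i)-1$, and I would then invoke the integrality of $m_{ii}$ to pass from the resulting $\frac12 n_i(\delta+i)-\frac12$ to $\frac12 n_i(\delta+i)-1$. The care required here is precisely in this last step: the integrality argument must be reconciled with the parity of $n_i(\delta+i)$ and with the two branches of the minimum, since the $\binom{n_i}{2}$ bound can be the active one (as for a single low-degree vertex), and one must verify that the stated substitution still yields a valid upper bound for $GA_1(G)$ once the off-diagonal contributions are taken into account.
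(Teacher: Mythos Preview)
Your argument for the two displayed inequalities is exactly the paper's: bound $m_{ii}$ by both $\binom{n_i}{2}$ and $\tfrac12 n_i(\delta+i)$, bound $m_{ij}$ by $n_in_j$, and then invoke Lemma~\ref{lema 2} with gap $j-i$ to pass from the first line to the second. Nothing to add there.

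Your caution about the connected refinement is well placed, and in fact the integrality step you propose does not go through. From $e_i\ge 1$ you get $m_{ii}\le \tfrac12 n_i(\delta+i)-\tfrac12$; integrality of $m_{ii}$ upgrades this to $\tfrac12 n_i(\delta+i)-1$ only when $n_i(\delta+i)$ is even (equivalently, when $e_i$ is forced to be even and hence $\ge 2$). When $n_i(\delta+i)$ is odd the termwise bound $m_{ii}\le \tfrac12 n_i(\delta+i)-1$ can genuinely fail: take the connected graph on $\{w,v_1,\dots,v_5\}$ with edges $wv_1$, $v_1v_2$, $v_1v_3$, $v_2v_4$, $v_2v_5$, $v_3v_4$, $v_3v_5$, $v_4v_5$. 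Here $\delta=1$, $\Delta=3$, and for the class $V_2=\{v_1,\dots,v_5\}$ of degree-$3$ vertices one has $n_2(\delta+2)=15$, $m_{22}=7$, while $\tfrac12\cdot 15-1=6.5<7$. The paper's own proof simply asserts $m_{ii}\le \tfrac12 n_i(\delta+i)-1$ in one line and does not address this parity issue either, so your hesitation is not a defect relative to the paper --- the gap is in the paper's argument as well. If you want to salvage the refinement as stated you cannot do it term by term; any valid argument must exploit the slack in the off-diagonal bounds $m_{ij}\le n_in_j$ to absorb the missing $\tfrac12$ in the odd-parity classes.
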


\begin{proof} First, notice that in every set $V_i$, since there are $n_i$ vertices, $m_{ii} \le \binom{n_i}{2}$.
Also, since $d_v=\delta+i$ for every vertex $v$ in $V_i$, $m_{ii} \le \frac{1}{2}n_i(\delta+i)$.
Moreover, since $V(G)\setminus V_i$ is nonempty, if $G$ is connected, then $m_{ii} \le \frac{1}{2}n_i(\delta+i)-1$.

The number of edges joining $V_i$ and $V_j$ is at most $n_i n_j$.
Thus, the result follows from \eqref{eq 1} and Lemma \ref{lema 2}.
\end{proof}

Note that the hypothesis $\Delta>\delta$ is not essential, since if $\Delta=\delta$ then the graph $G$ is regular and $GA_1(G)=m$.

\smallskip

Let us consider an ordering of the vertices in $G$ where $u<v$ implies that $d_u\leq d_v$.
Let us assume an orientation of the edges where $uv$ is always considered with the orientation given by the ordering $u<v$.
Let $k=\Delta-\delta$, let $m_i$ be the number of oriented edges whose tail is a vertex with degree $\delta+i$ and $m'_i$ the number of oriented edges whose head is a vertex with degree $\delta+i$ for $0\leq i \leq k$. Moreover, let $a_i$ be the number of edges whose tail is a vertex with degree $\delta+i$ and whose head is a vertex with degree at least $\delta+i+1$ with $0\leq i \leq k-1$, let $b_i$ the number of edges whose head is a vertex with degree $\delta+i$ and whose tail is a vertex with degree at most $\delta+i-1$ with $1\leq i \leq k$, and $c_i$ the number of edges joining two vertices with degree $\delta+i$ with $0\leq i \leq k$. Notice that $m_i=a_i+c_i$ and $m'_i=b_i+c_i$ for every $0\leq i \leq k$, $m_k=c_k$ and $m'_0=c_0$.

\smallskip

Define the classes of graphs $\mathcal{G}_1$, $\mathcal{G}_2$ and $\mathcal{G}_3$ as follows.
$\mathcal{G}_1$ is the set of graphs $G$ such that if $uv \in E(G)$, then $d_u=d_v$ or $\max \{d_u,d_v\}= \D$, where $\D$ is the maximum degree of $G$.
$\mathcal{G}_2$ is the set of graphs $G$ such that if $uv \in E(G)$, then $d_u=d_v$ or $\min \{d_u,d_v\}= \d$, where $\d$ is the minimum degree of $G$.
$\mathcal{G}_3$ is the set of graphs $G$ such that if $uv \in E(G)$, then $d_u=d_v$ or $|d_u - d_v|= 1$.

\begin{proposition}\label{Prop:abc}
Let $G$ be a nontrivial graph with minimum degree $\delta$ and maximum degree $\Delta > \delta$. Then
\begin{equation}\label{eq:abc1}
\sum_{i=0}^k c_i+ \sum_{i=0}^{k-1}\frac{2a_i\sqrt{\Delta(\delta+i)}}{\Delta+\delta+i}\leq GA_1(G) \leq \sum_{i=0}^k c_i+ \sum_{i=0}^{k-1}\frac{2a_i\sqrt{(\delta+i)(\delta+i+1)}}{2\delta+2i+1},
\end{equation}
and
\begin{equation}\label{eq:abc2}
\sum_{i=0}^k c_i+ \sum_{i=1}^{k}\frac{2b_i\sqrt{\delta(\delta+i)}}{2\delta+i}\leq GA_1(G) \leq \sum_{i=0}^k c_i+ \sum_{i=1}^{k}\frac{2b_i\sqrt{(\delta+i-1)(\delta+i)}}{2\delta+2i-1}.
\end{equation}
The lower bound in \eqref{eq:abc1} is attained if and only if $G \in \mathcal{G}_1$.
The upper bound in \eqref{eq:abc1} is attained if and only if $G \in \mathcal{G}_3$.
The lower bound in \eqref{eq:abc2} is attained if and only if $G \in \mathcal{G}_2$.
The upper bound in \eqref{eq:abc2} is attained if and only if $G \in \mathcal{G}_3$.
\end{proposition}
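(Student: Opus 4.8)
The plan is to start from the defining sum $GA_1(G)=\sum_{uv\in E(G)} g(d_u,d_v)$, where $g(x,y)=\frac{2\sqrt{xy}}{x+y}$ is the function of Corollary \ref{cor}. With the orientation fixed above, every edge runs from its lower‑degree endpoint (the tail) to its higher‑degree one (the head). The homogeneous edges (those with $d_u=d_v$) each contribute $g(\delta+i,\delta+i)=1$, so they account for exactly $\sum_{i=0}^{k}c_i$ in each of the four bounds. It then remains to estimate the contribution of the heterogeneous edges, and the only difference between \eqref{eq:abc1} and \eqref{eq:abc2} is whether these edges are grouped by the degree of their tail or of their head: a heterogeneous edge is counted by exactly one $a_i$ (through its tail, with $0\le i\le k-1$, since the tail degree is at most $\Delta-1$) and by exactly one $b_j$ (through its head, with $1\le j\le k$).

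The single analytic input is the strict monotonicity of $g$ in each variable, which I would deduce from Lemma \ref{lema 1} via $g(x,y)=f(\sqrt{x/y})$: for a fixed smaller argument $g$ is strictly decreasing in the larger argument, and for a fixed larger argument it is strictly increasing in the smaller one (in both cases $\sqrt{x/y}$ moves monotonically inside $[0,1]$, where $f$ strictly increases). For \eqref{eq:abc1} I would fix the tail: an edge counted by $a_i$ has tail degree $\delta+i$ and head degree $\delta+j$ with $i+1\le j\le k$, so its contribution $g(\delta+i,\delta+j)$ lies between $g(\delta+i,\Delta)$ (head pushed up to $\Delta$) and $g(\delta+i,\delta+i+1)$ (head pulled down to $\delta+i+1$). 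Summing over the $a_i$ such edges and over $0\le i\le k-1$, and adding $\sum_i c_i$, yields the two displayed expressions. For \eqref{eq:abc2} I would instead fix the head degree $\delta+i$ of an edge counted by $b_i$, whose tail degree lies between $\delta$ and $\delta+i-1$, and bound its contribution between $g(\delta,\delta+i)$ and $g(\delta+i-1,\delta+i)$ using monotonicity in the smaller argument; summing over $1\le i\le k$ gives \eqref{eq:abc2}.

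For the equality statements I would use that each comparison above is strict unless the replaced endpoint already equals the extremal value, and that a sum of like‑directed inequalities is an equality exactly when every term is. Thus equality in the lower bound of \eqref{eq:abc1} forces every heterogeneous edge to have head degree $\Delta$, i.e. $\max\{d_u,d_v\}=\Delta$, which is precisely $G\in\mathcal G_1$; equality in its upper bound forces head degree $=$ tail degree $+1$, i.e. $|d_u-d_v|=1$, which is $G\in\mathcal G_3$. Symmetrically, the lower and upper bounds in \eqref{eq:abc2} force $\min\{d_u,d_v\}=\delta$ and $|d_u-d_v|=1$, giving $\mathcal G_2$ and $\mathcal G_3$. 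The part requiring the most care is this equality analysis: one must check that the strict monotonicity of $f$ on $[0,1]$ really makes each per‑edge comparison strict away from the extremal configuration, and verify that the degenerate case with no heterogeneous edges (all $a_i=0$, resp. all $b_i=0$) is consistent, since then every bound collapses to $\sum_i c_i=m=GA_1(G)$ while the defining implication of $\mathcal G_1,\mathcal G_2,\mathcal G_3$ holds vacuously.
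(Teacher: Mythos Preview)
Your proposal is correct and follows essentially the same approach as the paper: separate the contribution $\sum_i c_i$ of the homogeneous edges, then bound each heterogeneous edge's term $g(\delta+i,\delta+j)$ by monotonicity of $f$ via $g(x,y)=f(\sqrt{x/y})$, grouping by tail degree for \eqref{eq:abc1} and by head degree for \eqref{eq:abc2}. The only cosmetic difference is that the paper works with $f$ decreasing on $[1,\infty)$ (ratios larger/smaller) while you phrase it with $f$ increasing on $[0,1]$ (ratios smaller/larger); since $f(t)=f(1/t)$ these are the same argument, and your treatment of the equality cases is in fact more explicit than the paper's ``one can easily check.''
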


\begin{proof}
Since
\[1<\frac{\delta+i+1}{\delta+i}\leq \frac{\delta+i+r}{\delta+i} \leq \frac{\Delta}{\delta+i}\]
for every $1\leq r \leq \Delta-\delta-i$ and $f$ is decreasing on $[1,\infty)$,
Lemma \ref{lema 1} gives
\[f\left( \sqrt{\frac{\delta+i+1}{\delta+i}}\,\right) \geq f\left( \sqrt{\frac{\delta+i+r}{\delta+i}}\,\right) \geq f\left( \sqrt{\frac{\Delta}{\delta+i}}\, \right)\! .\]
Hence, \eqref{eq 1} gives \eqref{eq:abc1}.

Since
\[1< \frac{\delta+i}{\delta+i-1} \leq \frac{\delta+i}{\delta+i-r} \leq \frac{\delta+i}{\delta} \]
for every $1\leq r \leq i$ and $f$ is decreasing on $[1,\infty)$,
Lemma \ref{lema 1} gives
\[f\left( \sqrt{\frac{\delta+i}{\delta+i-1}}\,\right) \geq f\left( \sqrt{\frac{\delta+i}{\delta+i-r}}\,\right) \geq f\left( \sqrt{\frac{\delta +i}{\delta}}\,\right)\! .\]
Therefore, \eqref{eq:abc2} follows from \eqref{eq 1}.

One can easily check the statements on the equalities.
\end{proof}

\begin{remark} Note that if $C:=\sum_{i=0}^k c_i$, $r_i:=2a_i\sqrt{\delta+i}$ and $r'_i:=2b_i\sqrt{\delta+i}$, then

\[C+ \sum_{i=0}^{k-1} r_i \frac{ \sqrt{\Delta}}{\Delta+\delta+i}\leq GA_1(G) \leq C+ \sum_{i=0}^{k-1} r_i \frac{\sqrt{\delta+i+1}}{2\delta+2i+1},\]
and
\[C+ \sum_{i=1}^{k} r'_i \frac{\sqrt{\delta}}{2\delta+i}\leq GA_1(G) \leq C+ \sum_{i=1}^{k} r'_i \frac{\sqrt{\delta+i-1}}{2\delta+2i-1}.\]
\end{remark}

Define the classes of graphs $\mathcal{G}_1^0$ and $\mathcal{G}_2^0$ as follows.
$\mathcal{G}_1^0$ is the set of graphs $G$ such that if $uv \in E(G)$, then $\max \{d_u,d_v\}= \D$, where $\D$ is the maximum degree of $G$.
$\mathcal{G}_2^0$ is the set of graphs $G$ such that if $uv \in E(G)$, then $\min \{d_u,d_v\}= \d$, where $\d$ is the minimum degree of $G$.
It is clear that $\mathcal{G}_1^0 \subset \mathcal{G}_1$ and $\mathcal{G}_2^0 \subset \mathcal{G}_2$.

\begin{corollary}\label{cor: m}
Let $G$ be a nontrivial graph with minimum degree $\delta\geq 2$ and maximum degree $\Delta > \delta$. Then
\[\sum_{i=0}^{k}\frac{2m_i\sqrt{\Delta(\delta+i)}}{\Delta+\delta+i}=\sum_{i=0}^{k-1}\frac{2m_i\sqrt{\Delta(\delta+i)}}{\Delta+\delta+i} +m_k \leq GA_1(G) \leq m,\]
and
\[\sum_{i=0}^{k}\frac{2m'_i\sqrt{\delta(\delta+i)}}{2\delta+i}=m'_0+\sum_{i=1}^{k}\frac{2m'_i\sqrt{\delta(\delta+i)}}{2\delta+i}\leq GA_1(G) \leq m.\]
The first (respectively, second) lower bound is attained if and only if $G \in \mathcal{G}_1^0$ (respectively, $G \in \mathcal{G}_2^0$).
\end{corollary}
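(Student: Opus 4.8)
The plan is to deduce the Corollary from Proposition \ref{Prop:abc} together with Corollary \ref{cor}, by rewriting the $m_i$ (resp.\ $m'_i$) sums in terms of the $a_i,c_i$ (resp.\ $b_i,c_i$) and then relaxing the contribution of the $c_i$ edges. First I would record the two displayed identities. Since $\delta+k=\Delta$, the $i=k$ term of $\sum_{i=0}^{k}\frac{2m_i\sqrt{\Delta(\delta+i)}}{\Delta+\delta+i}$ equals $\frac{2m_k\sqrt{\Delta\cdot\Delta}}{2\Delta}=m_k$, which gives the first equality; symmetrically the $i=0$ term of the second sum equals $m'_0$, giving the second. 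The upper bound $GA_1(G)\le m$ is immediate from \eqref{eq: bound} (equivalently, from Corollary \ref{cor}, each of the $m$ edges contributing $g(d_u,d_v)\le 1$).

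For the first lower bound I would use $m_i=a_i+c_i$ and $a_k=0$ to split
\[
\sum_{i=0}^{k}\frac{2m_i\sqrt{\Delta(\delta+i)}}{\Delta+\delta+i}=\sum_{i=0}^{k-1}\frac{2a_i\sqrt{\Delta(\delta+i)}}{\Delta+\delta+i}+\sum_{i=0}^{k}\frac{2c_i\sqrt{\Delta(\delta+i)}}{\Delta+\delta+i}.
\]
By Corollary \ref{cor} the coefficient $\frac{2\sqrt{\Delta(\delta+i)}}{\Delta+\delta+i}=g(\Delta,\delta+i)\le 1$, so the second sum is at most $\sum_{i=0}^{k}c_i$; hence the left-hand side is bounded above by $\sum_{i=0}^{k}c_i+\sum_{i=0}^{k-1}\frac{2a_i\sqrt{\Delta(\delta+i)}}{\Delta+\delta+i}$, which is $\le GA_1(G)$ by the lower bound in \eqref{eq:abc1}. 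The second lower bound is handled the same way, now using $m'_i=b_i+c_i$, the fact that $b_0=0$, the estimate $g(\delta,\delta+i)\le 1$ from Corollary \ref{cor}, and the lower bound in \eqref{eq:abc2}.

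The delicate point is the equality characterization. For the first bound, equality forces both relaxations to be tight: the lower bound in \eqref{eq:abc1} must be an equality, i.e.\ $G\in\mathcal{G}_1$, and $\sum_{i=0}^{k}c_i\,g(\Delta,\delta+i)=\sum_{i=0}^{k}c_i$ must hold. Since Corollary \ref{cor} gives $g(\Delta,\delta+i)<1$ for $i<k$ and $g(\Delta,\Delta)=1$, the latter is equivalent to $c_i=0$ for all $i<k$, i.e.\ every edge joining two vertices of equal degree joins two vertices of degree $\Delta$. Combined with $G\in\mathcal{G}_1$ this says exactly that every edge has $\max\{d_u,d_v\}=\Delta$, which is the definition of $\mathcal{G}_1^0$; conversely any $G\in\mathcal{G}_1^0$ lies in $\mathcal{G}_1$ and has no same-degree edge of degree below $\Delta$, so both equalities hold. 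The analogous argument, with $g(\delta,\delta+i)<1$ for $i>0$ and $g(\delta,\delta)=1$, shows the second lower bound is attained precisely when $G\in\mathcal{G}_2$ and $c_i=0$ for $i>0$, i.e.\ when $G\in\mathcal{G}_2^0$.

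I expect this bookkeeping of which $c_i$ survive, and matching it against the inclusions $\mathcal{G}_1^0\subset\mathcal{G}_1$ and $\mathcal{G}_2^0\subset\mathcal{G}_2$, to be the only real subtlety; the inequalities themselves are routine consequences of the monotonicity of $g$ already packaged in Proposition \ref{Prop:abc} and Corollary \ref{cor}.
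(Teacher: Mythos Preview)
Your proof is correct and follows exactly the route the paper intends: the corollary is stated without proof immediately after Proposition~\ref{Prop:abc}, and the expected derivation is precisely your splitting $m_i=a_i+c_i$ (resp.\ $m'_i=b_i+c_i$), relaxing the $c_i$ contributions via $g(\Delta,\delta+i)\le 1$ from Corollary~\ref{cor}, and invoking the lower bounds of \eqref{eq:abc1}--\eqref{eq:abc2}. Your equality analysis, matching ``$G\in\mathcal{G}_1$ and $c_i=0$ for $i<k$'' with the definition of $\mathcal{G}_1^0$, is also the right bookkeeping.
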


Since in a connected graph with at least $3$ vertices, there are no edges joining two vertices with degree 1, we have the following consequence.

\begin{corollary}
Let $G$ be a nontrivial connected graph with at least $3$ vertices, minimum degree $1$ and maximum degree $\Delta$. Then
\[\sum_{i=0}^{k}\frac{2m_i\sqrt{\Delta(i+1)}}{\Delta+i+1}\leq GA_1(G) \leq \frac{2\sqrt{2}\,m_0}{3}+ \sum_{i=1}^{k}m_i = \frac{2\sqrt{2}\,m_0}{3}+m-m_0.\]
\end{corollary}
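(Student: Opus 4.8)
The plan is to prove the two bounds separately. The lower bound will come essentially for free from the apparatus already in place, so the real work, and the only place where the hypotheses (connected, at least $3$ vertices, minimum degree $1$) are genuinely used, is the sharpened upper bound.

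For the lower bound I would first note that it is exactly the first lower bound of Corollary \ref{cor: m} after substituting $\delta=1$. Since that corollary is stated only for $\delta\geq 2$, I would not invoke it directly but instead re-derive the inequality from Proposition \ref{Prop:abc}, whose hypotheses ask only for $\Delta>\delta$ and therefore permit $\delta=1$. Starting from the lower bound in \eqref{eq:abc1}, I would use $m_i=a_i+c_i$ together with $m_k=c_k$, and then apply $\frac{2\sqrt{\Delta(\delta+i)}}{\Delta+\delta+i}\leq 1$ from Corollary \ref{cor} to estimate $\frac{2c_i\sqrt{\Delta(\delta+i)}}{\Delta+\delta+i}\leq c_i$. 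Regrouping the resulting terms into $\sum_{i=0}^{k}\frac{2m_i\sqrt{\Delta(\delta+i)}}{\Delta+\delta+i}$ keeps the inequality valid (I decrease the left side), and setting $\delta=1$ gives $\sum_{i=0}^{k}\frac{2m_i\sqrt{\Delta(i+1)}}{\Delta+i+1}\leq GA_1(G)$.

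For the upper bound I would partition the edges by the degree of their tail, recalling that each edge has a unique tail under the fixed orientation, so that $\sum_{i=0}^{k}m_i=m$. The crucial structural fact is that in a connected graph with at least $3$ vertices no edge joins two vertices of degree $1$: such an edge $uv$ with $d_u=d_v=1$ would form a connected component on the two vertices $\{u,v\}$, forcing $G$ itself to have only $2$ vertices. Consequently every one of the $m_0$ edges whose tail has degree $\delta=1$ has a head of degree $d\geq 2$, and its contribution to $GA_1(G)$ equals $\frac{2\sqrt{1\cdot d}}{1+d}=f(\sqrt{d})$. Since $\sqrt{d}\geq\sqrt{2}>1$ and $f$ is strictly decreasing on $[1,\infty)$ by Lemma \ref{lema 1}, this contribution is largest at $d=2$ and hence is at most $f(\sqrt{2})=\frac{2\sqrt{2}}{3}$. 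Every remaining edge contributes at most $1$ by Corollary \ref{cor}, and there are $\sum_{i=1}^{k}m_i=m-m_0$ of them, so summing gives $GA_1(G)\leq \frac{2\sqrt{2}}{3}\,m_0+\sum_{i=1}^{k}m_i=\frac{2\sqrt{2}\,m_0}{3}+m-m_0$.

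The main obstacle, indeed the only nontrivial point, is this upper bound: one must recognize that connectivity together with at least $3$ vertices forces every leaf to be adjacent to a vertex of degree at least $2$, which is precisely what sharpens the generic per-edge bound of $1$ down to $\frac{2\sqrt{2}}{3}$ on exactly the $m_0$ edges. To close the argument cleanly I would verify that $m_0$ counts \emph{all} edges incident to a degree-$1$ vertex: any such edge has its degree-$1$ endpoint as tail (the ordering places the smaller degree first) and an endpoint of strictly larger degree, so it contributes to $m_0$ and to no other $m_i$. This makes the count exact and confirms that the two displayed right-hand sides agree.
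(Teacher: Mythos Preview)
Your argument is correct and follows the same line as the paper, which simply observes that in a connected graph with at least three vertices no edge joins two degree-$1$ vertices and records the corollary as an immediate consequence of the preceding results. Your treatment of the lower bound (re-deriving the first inequality of Corollary~\ref{cor: m} from Proposition~\ref{Prop:abc} so as to avoid the $\delta\ge 2$ hypothesis) is exactly what is needed, and your upper bound is a slightly more direct version of the same idea: rather than passing through the full upper bound in \eqref{eq:abc1} and then weakening the $i\ge 1$ terms, you bound each edge's contribution individually, using $c_0=0$ to get $\frac{2\sqrt{2}}{3}$ on the $m_0$ edges and $1$ on the rest. One small point you might make explicit for completeness: the hypothesis $\Delta>\delta$ of Proposition~\ref{Prop:abc} is satisfied because connectivity with at least three vertices forces $\Delta\ge 2>1=\delta$ (the same observation you use for the upper bound).
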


Similarly, the following result, which is Corollary 3.11 in \cite{Das10b}, is an immediate consequence from Corollary \ref{cor: m}.
A vertex $v$ is called \emph{pendant} if the set of its neighbors has exactly one vertex, this is, if $d_v=1$.
Thus, with the notation above, there are $m_0$ pendant vertices.

\begin{corollary}
Let $G$ be a nontrivial connected graph with at least $3$ vertices, minimum degree $1$ and minimal non-pendant vertex degree $\delta_1$. Then
\[ GA_1(G) \leq \frac{2m_0\sqrt{\delta_1}}{\delta_1+1}+ m-m_0.\]
\end{corollary}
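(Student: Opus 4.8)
The plan is to split the edge set of $G$ into the edges incident with a pendant vertex and all the remaining edges, and to bound the $GA_1$-contribution of each class separately. First I would record the structure of the pendant edges: since $G$ is connected and has at least $3$ vertices, no edge can join two pendant vertices (such an edge would form a connected component isomorphic to $K_2$). Hence every one of the $m_0$ pendant vertices is incident with exactly one edge, whose other endpoint is a non-pendant vertex. In the orientation in which tails have the smaller degree, each pendant vertex is the tail of its unique edge and has degree $\delta=1$, so the number of pendant edges is precisely $m_0$ and the number of remaining edges is $m-m_0$.

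Next I would estimate the two classes. For a pendant edge $uv$ with $d_u=1$ and $d_v\geq \delta_1$, its summand in \eqref{eq 1} equals $\frac{2\sqrt{d_v}}{1+d_v}=f(\sqrt{d_v})$, with $f$ the function of Lemma \ref{lema 1}. Because $\delta_1\geq 2$ we have $\sqrt{d_v}\geq \sqrt{\delta_1}\geq 1$, and since $f$ is decreasing on $[1,\infty)$ (Lemma \ref{lema 1}) this summand is at most $f(\sqrt{\delta_1})=\frac{2\sqrt{\delta_1}}{\delta_1+1}$; summing over the $m_0$ pendant edges bounds their total contribution by $\frac{2m_0\sqrt{\delta_1}}{\delta_1+1}$. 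For each of the $m-m_0$ remaining edges, Corollary \ref{cor} gives a summand at most $1$, so their total contribution is at most $m-m_0$. Adding the two estimates yields the claimed inequality; this is exactly the refinement of Corollary \ref{cor: m} obtained by replacing the crude bound ``non-pendant endpoint has degree $\geq 2$'' (which gives the factor $\frac{2\sqrt2}{3}$ of the previous corollary) with the sharper ``degree $\geq \delta_1$''.

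There is no serious obstacle here. The only two points requiring care are the combinatorial observation that the pendant edges number exactly $m_0$, which rests on the hypotheses that $G$ is connected with $|V(G)|\geq 3$, and the correct use of the monotonicity of $f$ to see that a pendant edge is handled worst when its non-pendant endpoint has the smallest possible degree $\delta_1$.
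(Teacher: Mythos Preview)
Your proof is correct and is precisely the argument the paper has in mind when it calls the corollary ``an immediate consequence'' of the preceding results: split the edges into the $m_0$ pendant edges and the $m-m_0$ remaining ones, bound each pendant contribution by $f(\sqrt{\delta_1})=\frac{2\sqrt{\delta_1}}{\delta_1+1}$ via the monotonicity of $f$ on $[1,\infty)$ (Lemma~\ref{lema 1}), and bound each remaining contribution by $1$ via Corollary~\ref{cor}. The two delicate points you flag---that connectivity with $|V(G)|\ge 3$ forces $c_0=0$ so that pendant edges number exactly $m_0$, and that $\delta_1\ge 2$ so that $\sqrt{\delta_1}\ge 1$ lies in the decreasing range of $f$---are exactly the ones that need checking, and you handle them correctly.
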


Given any graph $G$ and $uv\in E(G)$, let us define the \emph{gradient} of the edge $uv$ as $\nabla_{uv}:=|d_u-d_v|$.

\begin{proposition}
Let $G$ be a nontrivial graph with minimum degree $\delta$ and maximum degree $\Delta$.
If $d= \min_{uv\in E(G)}\nabla_{uv}$ and $D= \max_{uv\in E(G)}\nabla_{uv}$, then
\begin{equation} \label{eq:p1}
\frac{2m\sqrt{\delta(\delta+D)}}{2\delta+D} \leq GA_1(G)\leq \frac{2m\sqrt{(\Delta-d)\Delta}}{2\Delta -d}.
\end{equation}
The equality in each inequality is attained if and only if $G$ is
either regular or bipartite with the two sets being respectively the set of vertices with degree $\delta$ and degree $\Delta$.
\end{proposition}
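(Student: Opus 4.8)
The plan is to bound the contribution of each edge separately and then sum over $E(G)$. Writing $GA_1(G)=\sum_{uv\in E(G)} g(d_u,d_v)$ with $g(x,y)=\frac{2\sqrt{xy}}{x+y}$ as in Corollary \ref{cor}, I would fix an edge $uv$ and assume without loss of generality $d_u\le d_v$, so that $\nabla_{uv}=d_v-d_u$ lies in $[d,D]$ while $\delta\le d_u\le d_v\le\Delta$. The whole argument rests on two monotonicity facts. First, for a fixed gap $\nabla=d_v-d_u\ge 1$, Lemma \ref{lema 2} says that $\frac{2\sqrt{a(a+\nabla)}}{2a+\nabla}$ is strictly increasing in the smaller degree $a$. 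Second, for a fixed smaller degree $a$, writing $g(a,a+\nabla)=f\big(\sqrt{(a+\nabla)/a}\big)$ with $f(t)=\frac{2t}{1+t^2}$ and noting $\sqrt{(a+\nabla)/a}\ge 1$, Lemma \ref{lema 1} shows this quantity is strictly decreasing in $\nabla$. In words: the edge term grows when the smaller endpoint degree grows, and shrinks when the degree gap grows.

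For the lower bound I would minimize the edge term. Since $d_u\ge\delta$ and $\nabla_{uv}\le D$, the two monotonicity facts give, for every edge,
\[
g(d_u,d_v)=\frac{2\sqrt{d_u(d_u+\nabla_{uv})}}{2d_u+\nabla_{uv}}\ \ge\ \frac{2\sqrt{\delta(\delta+\nabla_{uv})}}{2\delta+\nabla_{uv}}\ \ge\ \frac{2\sqrt{\delta(\delta+D)}}{2\delta+D},
\]
and summing over the $m$ edges yields the left-hand inequality in \eqref{eq:p1}.

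For the upper bound I would instead parametrize by the larger degree $b=d_v$, writing $g=\frac{2\sqrt{(b-\nabla)b}}{2b-\nabla}$; by Lemma \ref{lema 2} applied to the base $b-\nabla$ this is increasing in $b$, and it is again decreasing in $\nabla$. Since $d_v\le\Delta$ and $\nabla_{uv}\ge d$, I obtain $g(d_u,d_v)\le\frac{2\sqrt{(\Delta-d)\Delta}}{2\Delta-d}$ for every edge, and summing gives the right-hand inequality in \eqref{eq:p1}.

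The equality analysis then comes from the strictness in Lemmas \ref{lema 1} and \ref{lema 2}: equality in the lower bound forces every edge to satisfy $d_u=\delta$ and $\nabla_{uv}=D$, while equality in the upper bound forces $d_v=\Delta$ and $\nabla_{uv}=d$. In the non-regular case $\delta<\Delta$, either condition forces $d=D=\Delta-\delta$ (the maximum-degree vertex, resp.\ the minimum-degree vertex, is incident to an edge whose gap is $\Delta-\delta$) and hence that every edge joins a vertex of degree $\delta$ to one of degree $\Delta$, i.e.\ $G$ is bipartite with parts the degree-$\delta$ and degree-$\Delta$ vertices; the regular case $\delta=\Delta$ gives $d=D=0$, $GA_1(G)=m$, and equality in both. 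I expect the main obstacle to be organizing the two monotonicity directions consistently — the gap variable must be handled uniformly across the two different parametrizations — and, in the equality discussion, translating the edge-by-edge conditions into the global bipartite structure without circularity.
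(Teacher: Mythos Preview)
Your proof is correct and follows essentially the same strategy as the paper: bound each edge's contribution $g(d_u,d_v)$ individually and then sum over $E(G)$. The only difference is organizational --- where you split the per-edge bound into two monotonicity moves (first slide the smaller endpoint degree using Lemma~\ref{lema 2}, then slide the gap using Lemma~\ref{lema 1}), the paper instead bounds the ratio $d_v/d_u$ in one shot by the algebraic inequalities $\frac{\Delta}{\Delta-d}\le\frac{d_v}{d_u}\le\frac{\delta+D}{\delta}$ and applies Lemma~\ref{lema 1} once; the equality discussion then runs identically.
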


\begin{proof}
Consider any edge $uv \in E(G)$.
By symmetry, we can assume that $d_v \ge d_u$.
Thus, $d\leq d_v-d_u\leq D$ and
$$
\d d_v
\le \d d_u + \d D
\le \d d_u + d_u D.
$$
Hence,
$$
\frac{d_v}{d_u} \leq \frac{\delta+D}{\delta}
$$
with equality if and only if $d_v=d_u + D$ and $d_u= \d$.
Since
$$
\D d_u
\le \D d_v - \D d
\le \D d_v - d_v d,
$$
we have
$$
\frac{\Delta}{\Delta-d}\leq \frac{d_v}{d_u}
$$
with equality if and only if $d_u=d_v - d$ and $d_v= \D$.
Hence,
$$
1\leq \frac{\Delta}{\Delta-d}\leq \frac{d_v}{d_u} \leq \frac{\delta+D}{\delta},
$$
and Lemma \ref{lema 1} gives
\begin{equation} \label{eq:p2}
f\left( \sqrt{\frac{\Delta}{\Delta-d}}\, \right) \geq f\left( \sqrt{\frac{d_v}{d_u}}\, \right) \geq f\left( \sqrt{\frac{\delta+D}{\delta}}\, \right)\!.
\end{equation}
We obtain the inequalities in \eqref{eq:p1} by adding \eqref{eq:p2} for every $uv \in E(G)$.

Therefore, the equality in the lower bound is attained if and only if $d_v=d_u + D$ and $d_u= \d$ for every $uv \in E(G)$ with $d_v \ge d_u$;
the equality in the upper bound is attained if and only if $d_u=d_v - d$ and $d_v= \D$ for every $uv \in E(G)$ with $d_v \ge d_u$.
Hence, the equality in each inequality is attained if and only if $G$ is
either regular (if $D=0$) or bipartite with the two sets being respectively the set of vertices with degree $\delta$ and degree $\Delta$.
\end{proof}

Let $E_0,\dots,E_k$ (with $k=\Delta-\delta$) be a partition of the edges of $G$ given by the gradient where $e\in E_i$ if $\nabla_e=i$ for each $0\leq i \leq k$.
Let $e_i$ be the number of edges in $E_i$.

\begin{proposition}
Let $G$ be a nontrivial graph with minimum degree $\delta$ and maximum degree $\Delta$. Then
\begin{equation} \label{eq:p3}
\sum_{i=0}^k \frac{2e_i\sqrt{\delta(\delta+i)}}{2\delta+i} \leq GA_1(G) \leq \sum_{i=0}^k \frac{2e_i\sqrt{\Delta(\Delta-i)}}{2\Delta-i}.
\end{equation}
The upper (respectively, lower) bound is attained if and only if $G \in \mathcal{G}_1^0$ (respectively, $G \in \mathcal{G}_2^0$).
\end{proposition}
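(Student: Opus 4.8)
The plan is to follow the same template used in the preceding propositions, partitioning the edge set and applying the monotonicity of $f$ from Lemma \ref{lema 1}. Recall that for any edge $uv \in E_i$ we have $\nabla_{uv}=|d_u-d_v|=i$; writing $d_v \ge d_u$ without loss of generality, this means $d_v = d_u + i$. The contribution of this edge to $GA_1(G)$ via \eqref{eq 1} is $f\big(\sqrt{d_v/d_u}\,\big) = f\big(\sqrt{(d_u+i)/d_u}\,\big)$, so the whole sum defining $GA_1(G)$ reorganizes as a sum over the classes $E_0,\dots,E_k$.

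First I would bound the ratio $d_v/d_u$ from both sides for an edge in $E_i$. Since $d_u \ge \delta$ and $d_v = d_u + i$, the function $t \mapsto (t+i)/t = 1 + i/t$ is decreasing in $t$, so its largest value on $[\delta,\infty)$ is attained at $d_u=\delta$, giving $d_v/d_u \le (\delta+i)/\delta$. Similarly, since $d_v \le \Delta$ and $d_u = d_v - i$, writing the ratio as $d_v/(d_v-i)$, which is also decreasing in $d_v$, its smallest value is attained at $d_v=\Delta$, giving $d_v/d_u \ge \Delta/(\Delta-i)$. Hence
\[
1 \le \frac{\Delta}{\Delta-i} \le \frac{d_v}{d_u} \le \frac{\delta+i}{\delta}.
\]
Because $f$ is strictly decreasing on $[1,\infty)$ (Lemma \ref{lema 1}), applying $f$ to the square roots reverses the inequalities:
\[
f\!\left(\sqrt{\tfrac{\Delta}{\Delta-i}}\,\right) \ge f\!\left(\sqrt{\tfrac{d_v}{d_u}}\,\right) \ge f\!\left(\sqrt{\tfrac{\delta+i}{\delta}}\,\right).
\]
A direct computation identifies $f\big(\sqrt{(\delta+i)/\delta}\,\big)=\frac{2\sqrt{\delta(\delta+i)}}{2\delta+i}$ and $f\big(\sqrt{\Delta/(\Delta-i)}\,\big)=\frac{2\sqrt{\Delta(\Delta-i)}}{2\Delta-i}$.

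Summing these bounds over all $e_i$ edges in $E_i$, and then over $i=0,\dots,k$, yields exactly \eqref{eq:p3}. For the equality analysis, the lower bound is attained precisely when every edge realizes $d_v/d_u=(\delta+i)/\delta$, i.e. $d_u=\delta$ for every edge; this is the defining condition of $\mathcal{G}_2^0$ (each edge has an endpoint of minimum degree $\delta$). Symmetrically, the upper bound is attained precisely when $d_v=\Delta$ for every edge, which is the defining condition of $\mathcal{G}_1^0$. I expect no serious obstacle here, as the argument parallels the previous proposition almost verbatim; the only point requiring minor care is verifying that the two extremal ratio conditions correspond exactly to membership in $\mathcal{G}_2^0$ and $\mathcal{G}_1^0$ respectively, including the degenerate case $i=0$, where both bounds reduce to $c_i$-type terms and the conditions are vacuous.
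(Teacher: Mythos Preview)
Your proof is correct and follows essentially the same route as the paper: bound the ratio $d_v/d_u$ for each edge in $E_i$ between $\Delta/(\Delta-i)$ and $(\delta+i)/\delta$, apply the monotonicity of $f$ from Lemma~\ref{lema 1}, sum, and read off the equality cases. The only cosmetic difference is that you obtain the ratio bounds via the monotonicity of $t\mapsto 1+i/t$, whereas the paper derives them by the equivalent algebraic manipulations $i d_v\le i\Delta$ and $i\delta\le i d_u$.
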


\begin{proof}
Consider any edge $uv \in E_i$.
By symmetry, we can assume that $d_v-d_u=i$.
Since $i d_v \le i\D$, we have
$$
\D d_u = \D( d_v-i) \le \D d_v -i d_v.
$$
Hence,
$$
\frac{\Delta}{\Delta-i}\leq \frac{d_v}{d_u}
$$
with equality if and only if $d_v= \D$.
Since $i \d \le i d_u$,
$$
\d d_v = \d( d_u+i) \le \d d_u+i d_u,
$$
and we have
$$
\frac{d_v}{d_u} \leq \frac{\delta+i}{\delta}
$$
with equality if and only if $d_u= \d$.
Therefore,
$$
1\leq \frac{\Delta}{\Delta-i}\leq \frac{d_v}{d_u} \leq \frac{\delta+i}{\delta},
$$
and Lemma \ref{lema 1} gives
\begin{equation} \label{eq:p4}
f\left( \sqrt{\frac{\Delta}{\Delta-i}}\, \right) \geq f\left( \sqrt{\frac{d_v}{d_u}}\, \right) \geq f\left( \sqrt{\frac{\delta+i}{\delta}}\, \right)\!.
\end{equation}
We obtain the inequalities in \eqref{eq:p3} by adding \eqref{eq:p4} for every $uv \in E(G)$.

Therefore, the equality in the lower bound is attained if and only if $d_u= \d$ for every $uv \in E(G)$ with $d_v \ge d_u$;
the equality in the upper bound is attained if and only if $d_v= \D$ for every $uv \in E(G)$ with $d_v \ge d_u$.
Hence, the upper (respectively, lower) bound is attained if and only if $G \in \mathcal{G}_1^0$ (respectively, $G \in \mathcal{G}_2^0$).
\end{proof}

\begin{remark} \label{remark bipartite}
Therefore, notice that $GA_1(G)=\frac{2m\sqrt{\delta \Delta}}{\delta+\Delta}$ if and only if $\nabla_{uv}=\Delta-\delta$ for every edge $uv$. Furthermore, if $\delta>0$, this occurs if and only if the graph is
either regular or bipartite with the two sets being respectively the set of vertices with degree $\delta$ and degree $\Delta$.
\end{remark}

\section{Bounds involving other topological indices}

In \cite[Lemma 3]{S} appears the following result.

\begin{lemma} \label{l:h}
Let $h$ be the function $h(x,y)=\frac{2xy}{x+y}$ with $\d\le x,y \le \D$. Then
$
\d \le h(x,y) \le \D.
$
The lower (respectively, upper) bound is attained if and only if $x=y=\d$ (respectively, $x=y=\D$).
\end{lemma}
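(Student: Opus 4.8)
The plan is to analyze the function $h(x,y)=\frac{2xy}{x+y}$ on the square $[\d,\D]\times[\d,\D]$ and show it is monotone in each variable separately, which immediately pins the extrema at corners of the square and forces the equality conditions. First I would fix $y$ and treat $h$ as a function of $x$ alone. Writing $h(x,y)=\frac{2xy}{x+y}$, I would compute the partial derivative $\frac{\p h}{\p x}=\frac{2y(x+y)-2xy}{(x+y)^2}=\frac{2y^2}{(x+y)^2}$, which is strictly positive for $y>0$ (and $\d\ge 0$ guarantees $x+y>0$ on the relevant range, with strict positivity whenever $y>0$). Hence $h$ is strictly increasing in $x$ for each fixed $y>0$, and by the symmetry $h(x,y)=h(y,x)$ it is likewise strictly increasing in $y$ for each fixed $x>0$.

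Given strict monotonicity in each variable, the minimum of $h$ over $[\d,\D]^2$ is attained only at the corner $(x,y)=(\d,\d)$ and the maximum only at $(x,y)=(\D,\D)$. Evaluating gives $h(\d,\d)=\frac{2\d^2}{2\d}=\d$ and $h(\D,\D)=\frac{2\D^2}{2\D}=\D$, which yields the two-sided bound $\d\le h(x,y)\le\D$. The strictness of the monotonicity (the derivative $\frac{2y^2}{(x+y)^2}$ vanishes nowhere for $y>0$) gives that these extreme values are attained \emph{only} at the respective corners, so the lower bound forces $x=y=\d$ and the upper bound forces $x=y=\D$, exactly as claimed. One should note the edge case $\d=0$: if a variable can be zero the factor $y^2$ in the numerator degenerates, but since the stated hypothesis is $\d\le x,y\le\D$ with $\d$ the minimum degree of a nontrivial graph, one has $\d\ge 1$, so $x+y>0$ and the derivative is genuinely positive throughout.

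The main obstacle, such as it is, lies not in the monotonicity computation but in cleanly articulating the equality conditions. The subtle point is that monotonicity in each variable \emph{separately} does not by itself locate a global extremum on a two-dimensional domain; one must combine the two monotonicities, arguing that for any $(x,y)$ one has $h(x,y)\ge h(\d,y)\ge h(\d,\d)=\d$ with at least one inequality strict unless $x=\d$ and $y=\d$, and symmetrically for the upper bound. I would lay out this chain of inequalities explicitly so that the biconditional equality statement is transparent rather than merely asserted. An entirely equivalent and perhaps shorter route would bypass calculus altogether: since $h(x,y)=f(t)\cdot\tfrac{x+y}{2}$ is awkward, I would instead directly use the algebraic identities $h(x,y)-\d=\frac{2xy-\d(x+y)}{x+y}=\frac{x(y-\d)+\d(x-y)+\cdots}{x+y}$, but the derivative argument is the cleanest and I would favor it.
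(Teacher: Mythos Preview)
Your argument is correct. The partial derivative computation $\frac{\p h}{\p x}=\frac{2y^2}{(x+y)^2}>0$ is right, the symmetry gives monotonicity in both variables, and the chained inequality $h(x,y)\ge h(\d,y)\ge h(\d,\d)=\d$ (with the analogous chain for the upper bound) cleanly delivers both the bounds and the equality characterization.

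There is nothing to compare against, however: the paper does not prove this lemma but simply quotes it from \cite[Lemma 3]{S}. So your proof stands on its own rather than paralleling or diverging from anything in the present paper.

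One remark on alternatives: since $h(x,y)$ is the harmonic mean of $x$ and $y$, the reciprocal identity $\frac{2}{h(x,y)}=\frac{1}{x}+\frac{1}{y}$ gives the bounds in one line without calculus. From $\d\le x,y\le\D$ one gets $\frac{2}{\D}\le\frac{1}{x}+\frac{1}{y}\le\frac{2}{\d}$, hence $\d\le h(x,y)\le\D$, with equality on each side forcing both reciprocals to hit the same endpoint. This is perhaps the shortest route and avoids the edge-case discussion of $\d=0$ that you (correctly) had to address. Your derivative approach is equally valid; the final paragraph's aborted algebraic manipulation can simply be dropped, as you yourself indicate a preference for the derivative argument.
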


First, we obtain a lower bound of $GA_1(G)$ depending on $n$, $m$ and $\d$.

\begin{proposition} \label{t:end}
We have for any graph $G$ with minimum degree $\d$, $n$ vertices and $m$ edges
$$
GA_1(G) \ge \frac{2m\sqrt{(n-1)\d}}{n+\d-1}\,,
$$
and the equality is attained if and only if $G$ is either a complete graph or a star graph.
\end{proposition}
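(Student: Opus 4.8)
The plan is to exploit the structure of $GA_1$ as a sum over edges and to bound each edge's contribution from below by a single universal quantity depending only on $n$, $m$ and $\delta$. The key observation is that for any edge $uv$, both endpoints satisfy $\delta \le d_u, d_v \le n-1$, since the maximum possible degree in a simple graph on $n$ vertices is $n-1$. The summand for the edge $uv$ is exactly $g(d_u,d_v)=\frac{2\sqrt{d_u d_v}}{d_u+d_v}$, which is precisely the function analyzed in Corollary~\ref{cor}.

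First I would invoke Corollary~\ref{cor} with the degree bounds $a=\delta$ and $b=n-1$, which applies since every vertex degree lies in $[\delta, n-1]$. This immediately yields
$$
\frac{2\sqrt{(n-1)\delta}}{(n-1)+\delta} \le \frac{2\sqrt{d_u d_v}}{d_u+d_v}
$$
for every edge $uv \in E(G)$. Summing this inequality over all $m$ edges gives
$$
GA_1(G)=\sum_{uv\in E(G)}\frac{2\sqrt{d_u d_v}}{d_u+d_v} \ge \frac{2m\sqrt{(n-1)\delta}}{n+\delta-1},
$$
which is exactly the claimed bound.

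The characterization of equality is where the real work lies, and I expect this to be the main obstacle. By Corollary~\ref{cor}, equality in the per-edge lower bound holds if and only if $\{d_u,d_v\}=\{\delta, n-1\}$, so equality in the sum forces every edge to join a vertex of degree $\delta$ to a vertex of degree $n-1$ (or, in the degenerate case $\delta=n-1$, every vertex has degree $n-1$, giving the complete graph). I would split into two cases. If some vertex $w$ has degree $n-1$, then $w$ is adjacent to every other vertex; each such neighbor must then have degree $\delta$, and one checks that these degree-$\delta$ neighbors cannot be adjacent to each other without violating the equality condition, which forces $\delta=1$ and hence a star graph. If instead no vertex attains degree $n-1$, then by the equality condition every edge endpoint of the larger degree would have to equal $n-1$, a contradiction unless all degrees coincide, forcing $G$ to be regular with $\delta=n-1$, i.e.\ the complete graph $K_n$. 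Conversely, a direct substitution verifies that both $K_n$ (where $\delta=n-1$ and all summands equal $1$) and the star $K_{1,n-1}$ (where each edge joins a degree-$1$ vertex to the degree-$(n-1)$ center) attain equality, completing the characterization.
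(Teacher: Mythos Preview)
Your proposal is correct and follows exactly the paper's approach: apply Corollary~\ref{cor} with $a=\delta$ and $b=n-1$ to each summand, sum over the $m$ edges, and then read off the equality condition $\{d_u,d_v\}=\{\delta,n-1\}$ for every edge. Your equality analysis is slightly more elaborate than the paper's (your second case ``no vertex attains degree $n-1$'' is in fact vacuous once $\delta<n-1$, since any edge forces an endpoint of degree $n-1$), but the argument is sound and matches the paper's conclusion.
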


\begin{proof}
Recall that $\d \le d_u \le n-1$ for every $u\in V(G)$.
By Corollary \ref{cor}, taking $a=\d$ and $b=n-1$, we have
$$
GA_1(G) = \sum_{uv\in E(G)}\frac{2 \sqrt{d_u d_v}}{d_u + d_v}
\ge \sum_{uv\in E(G)}\frac{2\sqrt{(n-1) \d}}{n+\d-1}
= \frac{2m\sqrt{(n-1)\d}}{n+\d-1} \,.
$$
By Corollary \ref{cor}, the equality holds for $G$ if and only if every edge joins a vertex of degree $\d$ with a vertex of degree $n-1$;
if $\d=n-1$, then this holds if and only if $G$ is a complete graph;
if $\d<n-1$, then this holds if and only if $\d=1$ and $G$ is a star graph.
\end{proof}

In what follows we will need Cassels inequality \cite[Appendix 1]{Watson}.
Although it is a well-known result, it is not easy to find the characterization of the cases of equality.
For the sake of completeness, we prove here a more general statement (following the argument of Niculescu \cite{Niculescu})
that allows to characterize the equality.

\begin{lemma} \label{l:PS1}
Let $(X,\mu)$ be a measure space and $f,g : X \rightarrow \RR$ non-negative measurable functions.
If $\o f \le g \le \O f$ $\mu$-a.e. for positive constants $0<\o \le \O$, then
\begin{equation} \label{eq:PS1}
\Big(\int_X f^2 \,d\mu \Big)^{1/2} \Big(\int_X g^2 \,d\mu \Big)^{1/2}
\le \frac12 \left(\sqrt{\frac{\O}{\o}}+ \sqrt{\frac{\o}{\O}} \,\right) \int_X fg \,d\mu
\end{equation}
and the equality is attained if and only if we have $\o = \O$ or $f = g = 0$ $\mu$-a.e.
\end{lemma}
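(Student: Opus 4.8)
The plan is to prove the Cassels inequality \eqref{eq:PS1} by reducing it to a pointwise (or rather, measure-theoretic) estimate that can be integrated. The key algebraic observation is that the hypothesis $\o f \le g \le \O f$ controls the ratio $g/f$, and one wants to exploit the fact that a quantity like $(\O f - g)(g - \o f)$ is non-negative $\mu$-a.e. First I would expand this product: since $(\O f - g)(g-\o f)\ge 0$ $\mu$-a.e., we get
\begin{equation} \label{eq:plan1}
g^2 + \o\O f^2 \le (\o+\O) fg \qquad \mu\text{-a.e.}
\end{equation}
Integrating \eqref{eq:plan1} over $X$ yields $\int g^2\,d\mu + \o\O \int f^2\,d\mu \le (\o+\O)\int fg\,d\mu$.

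Next I would apply the arithmetic-geometric mean inequality to the left-hand side of the integrated form: for the two non-negative numbers $\int g^2\,d\mu$ and $\o\O\int f^2\,d\mu$, we have $2\sqrt{\o\O}\,(\int f^2\,d\mu)^{1/2}(\int g^2\,d\mu)^{1/2} \le \int g^2\,d\mu + \o\O\int f^2\,d\mu$. Combining this with the integrated inequality gives
$$
2\sqrt{\o\O}\,\Big(\int_X f^2\,d\mu\Big)^{1/2}\Big(\int_X g^2\,d\mu\Big)^{1/2} \le (\o+\O)\int_X fg\,d\mu.
$$
Dividing by $2\sqrt{\o\O}$ and noting that $\frac{\o+\O}{2\sqrt{\o\O}} = \frac12\big(\sqrt{\O/\o}+\sqrt{\o/\O}\big)$ produces exactly \eqref{eq:PS1}. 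This structure, proving a scalar inequality and then invoking AM-GM, is the standard route and the computations are routine.

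The hard part, and the reason the lemma is stated rather than merely cited, is the characterization of equality. Equality in \eqref{eq:PS1} forces equality in both of the two steps simultaneously. Equality in the AM-GM step requires $\int g^2\,d\mu = \o\O\int f^2\,d\mu$, while equality in the integrated version of \eqref{eq:plan1} requires $(\O f - g)(g-\o f)=0$ $\mu$-a.e., i.e. $g=\o f$ or $g=\O f$ holds pointwise a.e. I would split $X$ into the set $A$ where $g=\O f$ and the set $B$ where $g=\o f$ (up to a null set), and then substitute these pointwise identities into the AM-GM equality condition to derive a constraint. The delicate point is that if $\o<\O$, then on $A$ one has $g^2=\O^2 f^2$ and on $B$ one has $g^2=\o^2 f^2$, whereas the condition $\int g^2 = \o\O\int f^2$ mixes these; analyzing $\int_A f^2\,d\mu$ and $\int_B f^2\,d\mu$ shows that both integrals must vanish unless $\o=\O$, forcing $f=0$ $\mu$-a.e. (hence also $g=0$). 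I would therefore conclude that equality holds precisely when $\o=\O$ or $f=g=0$ $\mu$-a.e., and conversely both of these cases trivially give equality. The main obstacle is handling this case analysis cleanly when $\o<\O$, ensuring no nontrivial $f$ slips through.
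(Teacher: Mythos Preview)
Your derivation of the inequality is correct and is exactly the paper's route: expand $(g-\omega f)(g-\Omega f)\le 0$, integrate, and apply AM--GM to the two resulting integrals. The gap is in the equality analysis. With $A=\{g=\Omega f\}$, $B=X\setminus A$ (on which $g=\omega f$ a.e.\ by the first equality condition), and $I_A=\int_A f^2\,d\mu$, $I_B=\int_B f^2\,d\mu$, the AM--GM equality condition $\int_X g^2\,d\mu=\omega\Omega\int_X f^2\,d\mu$ reads $\Omega^2 I_A+\omega^2 I_B=\omega\Omega(I_A+I_B)$, i.e.\ $(\Omega-\omega)(\Omega I_A-\omega I_B)=0$; for $\omega<\Omega$ this yields only $\Omega I_A=\omega I_B$, not $I_A=I_B=0$. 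So your claim that ``both integrals must vanish'' is unjustified.

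In fact the stated equality characterization is \emph{false}, so this step cannot be repaired. Take $X=\{1,2\}$ with counting measure, $f=(1,2)$, $g=(4,2)$, $\omega=1$, $\Omega=4$: then $\omega f\le g\le\Omega f$, $\int f^2=5$, $\int g^2=20$, $\int fg=8$, and both sides of \eqref{eq:PS1} equal $10$, yet $\omega\ne\Omega$ and $f$ is not $0$ a.e. The paper's own argument fails at the analogous spot: it asserts that, for $f,g\ge 0$, the condition $\int_X g^2\,d\mu=\omega\Omega\int_X f^2\,d\mu$ forces $g=\sqrt{\omega\Omega}\,f$ $\mu$-a.e., which the same example refutes. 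The correct characterization of equality when $\omega<\Omega$ is that $X$ splits, up to a null set, into a set $A$ on which $g=\Omega f$ and its complement on which $g=\omega f$, with $\Omega\int_A f^2\,d\mu=\omega\int_{X\setminus A} f^2\,d\mu$.
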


\begin{proof}
Recall that
$$
\frac1{\e} a^2+\e b^2 \ge 2ab
$$
and the equality holds if and only if $a=\e b$.
Therefore, the hypotheses imply
$$
\begin{aligned}
0
\ge \int_X (g-\o f)(g-\O f) \, d\mu
& = \int_X g^2 \, d\mu -(\O+\o)\int_X fg \, d\mu + \O\o\int_X f^2 \, d\mu
\\
\left(\sqrt{\frac{\O}{\o}}+ \sqrt{\frac{\o}{\O}} \,\right) \int_X fg \,d\mu
& \ge \frac{1}{\sqrt{\O\o}}\int_X g^2 \, d\mu + \sqrt{\O\o}\int_X f^2 \, d\mu
\ge 2 \Big(\int_X g^2 \,d\mu \Big)^{1/2} \Big(\int_X f^2 \,d\mu \Big)^{1/2}.
\end{aligned}
$$

Furthermore, the equality in \eqref{eq:PS1} holds if and only if
$(g-\o f)(g-\O f)=0$ $\mu$-a.e. and $\int_X g^2 \,d\mu = \O\o \int_X f^2 \,d\mu$.
If $\o=\O$, then $g=\o f$ and both equalities hold.
Assume now that $\o<\O$.
Since $f,g \ge 0$ and
$$
\int_X g^2 \,d\mu = \O\o \int_X f^2 \,d\mu
\quad \;
\Leftrightarrow
\quad
\int_X \big(g-\sqrt{\O\o}\, f\big)\big(g+\sqrt{\O\o}\, f\big) \, d\mu=0,
$$
the equality $\int_X g^2 \,d\mu = \O\o \int_X f^2 \,d\mu$ is equivalent to
$g=\sqrt{\O\o}\, f$ $\mu$-a.e.
Thus,
$$
0 = (g-\o f)(g-\O f) = \big(\sqrt{\O\o}-\o)\big(\sqrt{\O\o}-\O \big)f^2
$$
and we conclude that if $\o<\O$, then the equality in \eqref{eq:PS1} is attained if and only if $f = g = 0$ $\mu$-a.e.
\end{proof}

We have the following direct consequence.

\begin{lemma} \label{l:PS2}
If $a_j,b_j\ge 0$ and $\o b_j \le a_j \le \O b_j$ for $1\le j \le k$, then
$$
\Big(\sum_{j=1}^k a_j^2 \Big)^{1/2} \Big(\sum_{j=1}^k b_j^2 \Big)^{1/2}
\le \frac12 \left(\sqrt{\frac{\O}{\o}}+ \sqrt{\frac{\o}{\O}} \,\right)\sum_{j=1}^k a_jb_j .
$$
If $a_j>0$ for some $1\le j \le k$, then the equality holds if and only if $\o=\O$ and $a_j=\o b_j$ for every $1\le j \le k$.
\end{lemma}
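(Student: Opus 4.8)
The plan is to obtain Lemma~\ref{l:PS2} as an immediate specialization of Lemma~\ref{l:PS1} to the counting measure on a finite set. First I would take $X=\{1,\dots,k\}$ equipped with the counting measure $\mu$, and define the non-negative measurable functions $f,g\colon X\to\RR$ by $f(j)=b_j$ and $g(j)=a_j$. With this identification the hypothesis $\o b_j\le a_j\le \O b_j$ for every $1\le j\le k$ is exactly the pointwise (hence $\mu$-a.e.) chain $\o f\le g\le \O f$ required by Lemma~\ref{l:PS1}, with the same positive constants $0<\o\le\O$.

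Next I would simply translate the three integrals appearing in \eqref{eq:PS1}: since $\mu$ is the counting measure, $\int_X f^2\,d\mu=\sum_{j=1}^k b_j^2$, $\int_X g^2\,d\mu=\sum_{j=1}^k a_j^2$, and $\int_X fg\,d\mu=\sum_{j=1}^k a_jb_j$. Substituting these into \eqref{eq:PS1} yields the claimed inequality verbatim, so the inequality part requires no computation beyond this dictionary.

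The only point that needs care is the equality statement, and this is where I would focus attention. Lemma~\ref{l:PS1} asserts that equality holds if and only if $\o=\O$ or $f=g=0$ $\mu$-a.e. Because every singleton has positive counting measure, here ``$\mu$-a.e.'' means ``for every $j$''; thus the second alternative $f=g=0$ is precisely $a_j=b_j=0$ for all $j$. Under the additional hypothesis that $a_j>0$ for some $j$, this alternative is impossible, so equality forces $\o=\O$. When $\o=\O$ the constraint $\o b_j\le a_j\le \O b_j$ collapses to $a_j=\o b_j$ for every $j$, which also supplies the converse direction (a direct substitution then checks that both sides equal $\o\sum_{j=1}^k b_j^2$). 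Hence, under the stated assumption, equality is attained exactly when $\o=\O$ and $a_j=\o b_j$ for every $1\le j\le k$, as claimed.

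I do not anticipate any genuine obstacle: the whole content is the observation that a finite sum is an integral against a counting measure, together with the remark that ``almost everywhere'' degenerates to ``everywhere'' in this discrete setting, which is what lets us read off the clean equality condition directly from Lemma~\ref{l:PS1}.
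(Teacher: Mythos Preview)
Your proposal is correct and matches the paper's approach: the paper simply states that Lemma~\ref{l:PS2} is a direct consequence of Lemma~\ref{l:PS1}, which is exactly the counting-measure specialization you describe. Your added discussion of the equality case (noting that ``$\mu$-a.e.'' becomes ``for every $j$'' and that the assumption $a_j>0$ for some $j$ rules out the trivial alternative) is a helpful elaboration of what the paper leaves implicit.
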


Recall that the \emph{variable Zagreb index} is defined in \cite{MN} as
$$
Z_\a(G) = \sum_{uv\in E(G)} (d_u d_v)^\a .
$$
The variable Zagreb index was used in the structure-boiling point modeling of benzenoid hydrocarbons.
The obtained model is practically identical to the model based on the variable vertex-connectivity index
and this is due to close relationship between the formulas for the two indices.
Note that $Z_{-1/2}$ is the usual Randi\'c index, $Z_{1}$ is the second Zagreb index $M_2$,
$Z_{-1}$ is the modified Zagreb index \cite{NKMT}, etc.

\begin{theorem} \label{t:lb5}
We have for any graph $G$ with minimum degree $\d$, maximum degree $\D$ and $m$ edges, and $\a \in \RR$
$$
\frac{c_{1,\a} m^2}{Z_{\a}(G)}
\le GA_1(G)
\le \frac{c_{2,\a} m^2}{Z_{\a}(G)} \,,
$$
with
$$
c_{1,\a}:=
\left\{
\begin{array}{ll}
\d^{2\a+1} \D^{-1}, \quad &\mbox{if }\ \a\ge -1/2,\\
\D^{2\a}, \quad &\mbox{if }\ \a\le -1/2,
\end{array}
\right.
\qquad
c_{2,\a}:=
\left\{
\begin{array}{ll}
\frac{\D(\D^{2\a}+\d^{2\a})^2}{4 \d^{2\a+1}}\,, \quad &\mbox{if }\ \a\ge -1/2,\\
\frac{(\D^{2\a}+\d^{2\a})^2}{4 \D^{2\a}}\,, \quad &\mbox{if }\ \a\le -1/2,
\end{array}
\right.
$$
and each inequality is attained for some fixed $\a$ if and only if $G$ is regular.
\end{theorem}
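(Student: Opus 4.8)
The plan is to route the whole comparison through the companion index $Z_{-\alpha}$, using the elementary rewriting
\[
GA_1(G)=\sum_{uv\in E(G)}\frac{2\sqrt{d_ud_v}}{d_u+d_v}
=\sum_{uv\in E(G)}\frac{2(d_ud_v)^{\alpha+1/2}}{d_u+d_v}\,(d_ud_v)^{-\alpha}.
\]
First I would establish a purely local two-sided bound for the bracketed weight. Since $2\delta\le d_u+d_v\le 2\Delta$ and $\delta^2\le d_ud_v\le\Delta^2$ on every edge,
\[
\frac{(d_ud_v)^{\alpha+1/2}}{\Delta}\le\frac{2(d_ud_v)^{\alpha+1/2}}{d_u+d_v}\le\frac{(d_ud_v)^{\alpha+1/2}}{\delta},
\]
and then estimating $(d_ud_v)^{\alpha+1/2}$ by $\delta^{2\alpha+1}$ or $\Delta^{2\alpha+1}$ according to the sign of $\alpha+\tfrac12$ gives constants $k$ and $K$ with $k\le\frac{2(d_ud_v)^{\alpha+1/2}}{d_u+d_v}\le K$. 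Multiplying by $(d_ud_v)^{-\alpha}$ and summing over the edges yields $kZ_{-\alpha}(G)\le GA_1(G)\le KZ_{-\alpha}(G)$, where $k=\delta^{2\alpha+1}/\Delta$ and $K=\Delta^{2\alpha+1}/\delta$ for $\alpha\ge-1/2$, while $k=\Delta^{2\alpha}$ and $K=\delta^{2\alpha}$ for $\alpha\le-1/2$.

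The second step relates $Z_{-\alpha}(G)$ to $m^2/Z_\alpha(G)$. Taking $a_{uv}=(d_ud_v)^{\alpha/2}$ and $b_{uv}=(d_ud_v)^{-\alpha/2}$ one has $\sum a_{uv}^2=Z_\alpha$, $\sum b_{uv}^2=Z_{-\alpha}$ and $\sum a_{uv}b_{uv}=m$. The Cauchy--Schwarz inequality immediately gives $Z_\alpha Z_{-\alpha}\ge m^2$, hence $Z_{-\alpha}\ge m^2/Z_\alpha$. For the reverse estimate the ratio $a_{uv}/b_{uv}=(d_ud_v)^\alpha$ lies in the interval with endpoints $\delta^{2\alpha}$ and $\Delta^{2\alpha}$, so Lemma \ref{l:PS2} applies with $\{\omega,\Omega\}=\{\delta^{2\alpha},\Delta^{2\alpha}\}$ and produces $Z_\alpha Z_{-\alpha}\le F\,m^2$ with $F=\frac{(\Delta^{2\alpha}+\delta^{2\alpha})^2}{4\Delta^{2\alpha}\delta^{2\alpha}}$, that is $Z_{-\alpha}\le F\,m^2/Z_\alpha$.

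Combining the two steps closes the argument: $GA_1\ge kZ_{-\alpha}\ge k\,m^2/Z_\alpha$ gives the lower bound with $c_{1,\alpha}=k$, and $GA_1\le KZ_{-\alpha}\le KF\,m^2/Z_\alpha$ gives the upper bound with $c_{2,\alpha}=KF$; a short computation confirms that $k$ equals the stated $c_{1,\alpha}$ and that $KF$ collapses to $\frac{\Delta(\Delta^{2\alpha}+\delta^{2\alpha})^2}{4\delta^{2\alpha+1}}$ (for $\alpha\ge-1/2$) and to $\frac{(\Delta^{2\alpha}+\delta^{2\alpha})^2}{4\Delta^{2\alpha}}$ (for $\alpha\le-1/2$). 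For the equality discussion I would trace back which inequalities are used in each direction. Equality in the local bound forces, on every edge, $d_u+d_v=2\Delta$ and $d_ud_v=\delta^2$ (lower bound) or $d_u+d_v=2\delta$ and $d_ud_v=\Delta^2$ (upper bound), each of which already pins every edge to $d_u=d_v$ with $\delta=\Delta$; the Cauchy--Schwarz equality case (constant $d_ud_v$) and the Lemma \ref{l:PS2} equality case ($\delta^{2\alpha}=\Delta^{2\alpha}$, or $F=1$ when $\alpha=0$) are then automatic. Thus each bound is attained for a fixed $\alpha$ precisely when $G$ is regular.

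The main obstacle I expect is bookkeeping rather than conceptual: one must split the analysis cleanly at $\alpha=-\tfrac12$, where the sign of $\alpha+\tfrac12$ decides whether $(d_ud_v)^{\alpha+1/2}$ is extremized at $\delta^2$ or at $\Delta^2$, and separately keep track of the orientation of the interval $[\delta^{2\alpha},\Delta^{2\alpha}]$, which flips at $\alpha=0$; one then has to verify that the products $k$ and $KF$ simplify to the two displayed closed forms in each regime. Some additional care is needed to check that the two equality conditions are mutually compatible and jointly force regularity, and to handle the degenerate factor $F=1$ arising at $\alpha=0$.
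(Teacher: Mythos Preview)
Your proposal is correct and follows essentially the same route as the paper: both proofs factor $GA_1(G)=\sum\frac{2(d_ud_v)^{\alpha+1/2}}{d_u+d_v}(d_ud_v)^{-\alpha}$, bound the weight $\frac{2(d_ud_v)^{\alpha+1/2}}{d_u+d_v}$ using $d_u+d_v\in[2\delta,2\Delta]$ and the sign of $\alpha+\tfrac12$, and then compare $Z_{-\alpha}$ with $m^2/Z_\alpha$ via Cauchy--Schwarz (one direction) and Lemma~\ref{l:PS2} (the other). Your presentation is somewhat more modular---you isolate the two-sided estimate $kZ_{-\alpha}\le GA_1\le KZ_{-\alpha}$ before invoking the $Z_{-\alpha}\leftrightarrow m^2/Z_\alpha$ comparison---whereas the paper interleaves these steps, but the ingredients and the resulting constants are identical; one small caveat is that in your equality discussion the extremal value of $d_ud_v$ depends on which side of $\alpha=-\tfrac12$ you are on (it is $\Delta^2$, not $\delta^2$, in the lower-bound case when $\alpha<-\tfrac12$), though your conclusion that regularity is forced remains correct in every regime.
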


\begin{proof}
Cauchy-Schwarz inequality gives
$$
m^2
= \Big( \sum_{uv\in E(G)} (d_u d_v)^{\a/2} (d_u d_v)^{-\a/2} \Big)^2
\le \sum_{uv\in E(G)} (d_u d_v)^{\a} \sum_{uv\in E(G)} (d_u d_v)^{-\a}
= Z_{\a}(G) \sum_{uv\in E(G)} (d_u d_v)^{-\a}.
$$
We have
$$
GA_1(G)
= \sum_{uv\in E(G)}\frac{2\sqrt{d_u d_v}}{d_u + d_v}
\ge \frac1{\D}\sum_{uv\in E(G)} (d_u d_v)^{\a+1/2}(d_u d_v)^{-\a}
\,.
$$
If $\a\le -1/2$, then
$$
GA_1(G)
\ge \frac1{\D}\sum_{uv\in E(G)} (d_u d_v)^{\a+1/2}(d_u d_v)^{-\a}
\ge \D^{2\a}\sum_{uv\in E(G)} (d_u d_v)^{-\a}
\ge \frac{\D^{2\a} m^2}{Z_{\a}(G)} \,.
$$
If $\a\ge -1/2$, then
$$
GA_1(G)
\ge \frac{1}{\D} \sum_{uv\in E(G)} (d_u d_v)^{\a+1/2}(d_u d_v)^{-\a}
\ge \frac{\d^{2\a+1}}{\D} \sum_{uv\in E(G)} (d_u d_v)^{-\a}
\ge \frac{\d^{2\a+1} m^2}{\D Z_{\a}(G)} \,.
$$
Hence, we obtain
$$
\frac{c_{1,\a} m^2}{Z_{\a}(G)}
\le GA_1(G).
$$

Since
$$
\begin{aligned}
\d^{2\a} \le \frac{(d_u d_v)^{\a/2}}{(d_u d_v)^{-\a/2}}=(d_u d_v)^{\a} \le \D^{2\a},
& \qquad
\text{ if }\,\a\ge 0,
\\
\D^{2\a} \le \frac{(d_u d_v)^{\a/2}}{(d_u d_v)^{-\a/2}}=(d_u d_v)^{\a} \le \d^{2\a},
& \qquad
\text{ if }\,\a\le 0,
\end{aligned}
$$
Lemma \ref{l:PS2} gives
$$
\begin{aligned}
m^2
& = \Big( \sum_{uv\in E(G)} (d_u d_v)^{\a/2} (d_u d_v)^{-\a/2} \Big)^2
\ge \frac{\sum_{uv\in E(G)} (d_u d_v)^{\a} \sum_{uv\in E(G)} (d_u d_v)^{-\a}}{\frac14\Big(\frac{\D^{\a}}{\d^{\a}}+\frac{\d^{\a}}{\D^{\a}} \Big)^2}
\\
& = \frac{4\,\D^{2\a}\d^{2\a}}{(\D^{2\a}+\d^{2\a})^2} \,\,Z_{\a}(G) \!\! \sum_{uv\in E(G)} (d_u d_v)^{-\a}.
\end{aligned}
$$
If $\a\le -1/2$, then
$$
\begin{aligned}
GA_1(G)
& \le \frac1{\d}\sum_{uv\in E(G)} (d_u d_v)^{\a+1/2}(d_u d_v)^{-\a}
\le \d^{2\a} \sum_{uv\in E(G)} (d_u d_v)^{-\a}
\\
& \le \d^{2\a} \frac{(\D^{2\a}+\d^{2\a})^2}{4\,\D^{2\a}\d^{2\a}}\, \frac{m^2}{Z_{\a}(G)}
= \frac{(\D^{2\a}+\d^{2\a})^2}{4\, \D^{2\a}}\, \frac{m^2}{Z_{\a}(G)} \,.
\end{aligned}
$$
If $\a\ge -1/2$, then
$$
\begin{aligned}
GA_1(G)
& \le \frac1{\d}\sum_{uv\in E(G)} (d_u d_v)^{\a+1/2}(d_u d_v)^{-\a}
\le \frac{\D^{2\a+1}}{\d}\sum_{uv\in E(G)} (d_u d_v)^{-\a}
\\
& \le \frac{\D^{2\a+1}}{\d}\, \frac{(\D^{2\a}+\d^{2\a})^2}{4\,\D^{2\a}\d^{2\a}}\, \frac{m^2}{Z_{\a}(G)}
= \frac{\D(\D^{2\a}+\d^{2\a})^2}{4\,\d^{2\a+1}}\, \frac{m^2}{Z_{\a}(G)} \,.
\end{aligned}
$$
Hence, we obtain
$$
 GA_1(G)
\le \frac{c_{2,\a} m^2}{Z_{\a}(G)} \,,
$$

If the graph is regular, then
$c_{1,\a}=c_{2,\a}=\D^{2\a}$,
the lower and upper bounds are the same, and they are equal to $GA_1(G)$.
If a bound is attained for some $\a$, then we have either
$\frac{d_u+d_v}2 = \D$ for every $uv\in E(G)$ or $\frac{d_u+d_v}2 = \d$ for every $uv\in E(G)$ and we conclude that $d_u = d_v$ for every $u,v\in V(G)$.
\end{proof}

\begin{corollary} \label{c:lb18}
We have for any graph $G$ with minimum degree $\d$, maximum degree $\D$ and $m$ edges
$$
\frac{\d^{3} m^2}{\D M_{2}(G)}
\le GA_1(G)
\le \frac{\D(\D^{2}+\d^{2})^2 m^2}{4\, \d^{3} M_{2}(G)} \,,
$$
and each inequality is attained if and only if $G$ is regular.
\end{corollary}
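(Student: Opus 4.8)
The plan is to derive this corollary as the special case $\a = 1$ of Theorem~\ref{t:lb5}. The key observation, already recorded in the paragraph preceding that theorem, is that the second Zagreb index coincides with the variable Zagreb index at exponent one, namely $M_2(G) = Z_1(G)$. Hence every occurrence of $Z_\a(G)$ in Theorem~\ref{t:lb5} becomes $M_2(G)$ once we fix $\a = 1$.

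First I would determine which branch of the piecewise constants $c_{1,\a}$ and $c_{2,\a}$ is active. Since $1 \ge -1/2$, the relevant formulas are $c_{1,\a} = \d^{2\a+1}\D^{-1}$ and $c_{2,\a} = \frac{\D(\D^{2\a}+\d^{2\a})^2}{4\d^{2\a+1}}$. Substituting $\a = 1$, so that $2\a = 2$ and $2\a+1 = 3$, yields
$$
c_{1,1} = \frac{\d^3}{\D}, \qquad c_{2,1} = \frac{\D(\D^2+\d^2)^2}{4\d^3}.
$$

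Next I would insert these constants together with $Z_1(G) = M_2(G)$ into the two-sided bound $\frac{c_{1,\a}m^2}{Z_\a(G)} \le GA_1(G) \le \frac{c_{2,\a}m^2}{Z_\a(G)}$ provided by Theorem~\ref{t:lb5}. This reproduces verbatim the asserted inequalities $\frac{\d^3 m^2}{\D M_2(G)} \le GA_1(G) \le \frac{\D(\D^2+\d^2)^2 m^2}{4\d^3 M_2(G)}$. For the equality statement, Theorem~\ref{t:lb5} guarantees that, for any fixed value of $\a$, each bound is attained if and only if $G$ is regular; applying this with the fixed value $\a = 1$ gives the claimed characterization.

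There is essentially no genuine obstacle here, since the corollary is a pure specialization of the theorem. The only point requiring care is the bookkeeping of the piecewise definitions: one must confirm that $\a = 1$ lies in the $\a \ge -1/2$ regime for both $c_{1,\a}$ and $c_{2,\a}$, and that the exponent simplifications $2\a = 2$ and $2\a+1 = 3$ are carried through consistently in numerator and denominator. Once this is verified, both the inequalities and their equality cases follow immediately.
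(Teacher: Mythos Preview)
Your proposal is correct and matches the paper's intended derivation: the corollary is stated immediately after Theorem~\ref{t:lb5} without a separate proof, precisely because it is just the specialization $\a=1$ (using $Z_1=M_2$) that you carry out. The only thing the paper leaves implicit is the arithmetic you have written out.
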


With motivation from the Randi\'c, Zagreb and harmonic indices, the \emph{general sum-connectivity index} $H_{\a}$
was defined by Zhou and Trinajsti\'c in \cite{ZT2} as
$$
H_{\a}(G) = \sum_{uv\in E(G)} (d_u + d_v)^{\a},
$$
with $\a \in \RR$.
Note that $H_{1}$ is the first Zagreb index $M_1$, $2H_{-1}$ is the harmonic index $H$,
$H_{-1/2}$ is the sum-connectivity index, etc.

\begin{theorem} \label{t:mh}
We have for any graph $G$ with minimum degree $\d$ and maximum degree $\D$
$$
\frac{4\,\D \d \sqrt{M_2(G)H_{-2}(G)}}{\D^{2}+\d^{2}}
\le GA_1(G)
\le 2 \sqrt{M_2(G)H_{-2}(G)} \,.
$$
The equality in the lower bound is attained if and only if $G$ is regular.
The equality in the upper bound is attained if and only if there exists a constant $\l$ such that $d_u d_v (d_u + d_v)^2 = \l$ for every $uv\in E(G)$.
\end{theorem}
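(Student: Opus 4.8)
The plan is to express $GA_1(G)$ as a scalar product of two sequences indexed by the edges and then apply Cauchy--Schwarz for the upper bound and the reverse inequality of Lemma \ref{l:PS2} for the lower bound. For each $uv\in E(G)$ set $a_{uv}=\sqrt{d_u d_v}$ and $b_{uv}=(d_u+d_v)^{-1}$. Then $\sum_{uv} a_{uv}^2 = \sum_{uv} d_u d_v = M_2(G)$ and $\sum_{uv} b_{uv}^2 = \sum_{uv}(d_u+d_v)^{-2}=H_{-2}(G)$, while from the definition of $GA_1$ we get $\sum_{uv} a_{uv}b_{uv} = \sum_{uv}\frac{\sqrt{d_u d_v}}{d_u+d_v}=\tfrac12 GA_1(G)$. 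This single observation links all three quantities.

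For the upper bound I would apply the Cauchy--Schwarz inequality directly: $\tfrac12 GA_1(G)=\sum_{uv} a_{uv}b_{uv}\le\big(\sum_{uv}a_{uv}^2\big)^{1/2}\big(\sum_{uv}b_{uv}^2\big)^{1/2}=\sqrt{M_2(G)H_{-2}(G)}$, which is exactly $GA_1(G)\le 2\sqrt{M_2(G)H_{-2}(G)}$. Equality in Cauchy--Schwarz holds if and only if the sequences $(a_{uv})$ and $(b_{uv})$ are proportional, that is, $\sqrt{d_u d_v}\,(d_u+d_v)$ is constant over the edges; squaring, this is the condition $d_u d_v (d_u+d_v)^2=\l$ claimed in the statement.

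For the lower bound I would feed the same $a_{uv},b_{uv}$ into Lemma \ref{l:PS2}. The key step is to pin down the extreme ratios $\o,\O$ satisfying $\o\,b_{uv}\le a_{uv}\le \O\,b_{uv}$, equivalently $\o\le \sqrt{d_u d_v}\,(d_u+d_v)\le \O$. Since $\d\le d_u,d_v\le\D$ we have $\d\le\sqrt{d_u d_v}\le\D$ and $2\d\le d_u+d_v\le 2\D$, and multiplying these gives $2\d^2\le \sqrt{d_u d_v}\,(d_u+d_v)\le 2\D^2$. Taking $\o=2\d^2$ and $\O=2\D^2$ we obtain $\sqrt{\O/\o}+\sqrt{\o/\O}=\D/\d+\d/\D=(\D^2+\d^2)/(\D\d)$, so Lemma \ref{l:PS2} yields $\sqrt{M_2(G)H_{-2}(G)}\le \frac{\D^2+\d^2}{2\D\d}\cdot\tfrac12 GA_1(G)$, which rearranges to the stated lower bound.

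The equality clauses follow from Lemma \ref{l:PS2} (applicable since every $a_{uv}>0$): equality in the lower bound forces $\o=\O$, i.e.\ $2\d^2=2\D^2$, hence $\d=\D$ and $G$ is regular; conversely a regular graph makes both bounds equal to $m=GA_1(G)$. The only non-routine point in the whole argument is the identification of the correct $\o,\O$; once the product $\sqrt{d_u d_v}\,(d_u+d_v)$ is trapped between $2\d^2$ and $2\D^2$, the precise constant $4\D\d/(\D^2+\d^2)$ drops out, so I do not anticipate any serious obstacle.
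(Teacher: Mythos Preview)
Your proof is correct and follows essentially the same approach as the paper: both define sequences $a_{uv}\propto\sqrt{d_u d_v}$ and $b_{uv}=(d_u+d_v)^{-1}$, apply Cauchy--Schwarz for the upper bound, and Lemma~\ref{l:PS2} with the ratio $\sqrt{d_u d_v}\,(d_u+d_v)\in[2\d^2,2\D^2]$ for the lower bound. The only difference is a harmless normalization (the paper takes $a_{uv}=2\sqrt{d_u d_v}$, you take $a_{uv}=\sqrt{d_u d_v}$), and the equality analyses coincide.
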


\begin{proof}
Cauchy-Schwarz inequality gives
$$
GA_1(G)^2 =
\Big( \sum_{uv\in E(G)} \frac{2\sqrt{d_u d_v}}{d_u + d_v} \;\Big)^2
\le \sum_{uv\in E(G)} 4d_u d_v \sum_{uv\in E(G)} \frac{1}{(d_u + d_v)^2}
= 4 M_2(G)H_{-2}(G).
$$

Since
$$
4\d^{2} \le \frac{2\sqrt{d_u d_v}}{(d_u + d_v)^{-1}}=2\sqrt{d_u d_v}\,(d_u + d_v) \le 4\D^{2},
$$
Lemma \ref{l:PS2} gives
$$
\begin{aligned}
GA_1(G)^2
& = \Big( \sum_{uv\in E(G)} \frac{2\sqrt{d_u d_v}}{d_u + d_v} \;\Big)^2
\ge \frac{\sum_{uv\in E(G)} 4d_u d_v \sum_{uv\in E(G)} \frac{1}{(d_u + d_v)^2}}{\frac14 \Big(\frac{\D}{\d}+\frac{\d}{\D} \Big)^2}
\\
& = \frac{16\,\D^{2}\d^{2}M_2(G)H_{-2}(G)}{(\D^{2}+\d^{2})^2} \,.
\end{aligned}
$$

If the graph is regular, then the lower and upper bounds are the same, and they are equal to $GA_1(G)$.

If the lower bound is attained, then Lemma \ref{l:PS2} gives that $4\d^2 = 4\D^2$ and $G$ is regular.

If the upper bound is attained, then Cauchy-Schwarz inequality gives that
$$
\frac{2\sqrt{d_u d_v}}{(d_u + d_v)^{-1}}=2\sqrt{d_u d_v}\,(d_u + d_v)
$$
is constant, and so there exists a constant $\l$ such that $d_u d_v (d_u + d_v)^2 = \l$ for every $uv\in E(G)$.
\end{proof}

We say that a graph is $(\a,\b)$-biregular if it is a bipartite graph for which any vertex in one side of the given bipartition has degree $\a$
and any vertex in the other side of the bipartition has degree $\b$.

\medskip

The following result characterizes in many cases the equality in the upper bound in Theorem \ref{t:mh}.

\begin{proposition} \label{p:mh}
Let $G$ be a graph.

\begin{itemize}
\item
If there exists a constant $\l$ such that $d_u d_v (d_u + d_v)^2 = \l$ for every $uv\in E(G)$, then each connected component of $G$ is either regular or biregular.
\item
If $G$ is a connected graph, then there exists a constant $\l$ such that $d_u d_v (d_u + d_v)^2 = \l$ for every $uv\in E(G)$ if and only if $G$ is either regular or biregular.
\end{itemize}
\end{proposition}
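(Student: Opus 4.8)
The plan is to rest everything on the strict monotonicity of the function $\phi(x,y):=xy(x+y)^2$ on the positive reals. First I would record the key analytic fact: for fixed $x>0$ the map $y\mapsto\phi(x,y)$ is strictly increasing on $(0,\infty)$, because $\partial_y\phi=x(x+y)(x+3y)>0$; by the symmetry $\phi(x,y)=\phi(y,x)$ the same holds in $x$. The consequence I need is a uniqueness statement: given $x>0$ and a value $\lambda$, there is at most one $y>0$ with $\phi(x,y)=\lambda$.

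For the first item I would fix a connected component $C$ of $G$. If $C$ consists of a single vertex it is $0$-regular and there is nothing to prove, so I may assume $C$ contains an edge; then $\lambda=\phi(d_u,d_v)>0$ on that edge and every vertex of $C$ has degree $\ge 1$, so the monotonicity above applies. The crucial local step is this: for a vertex $v\in C$, each neighbor $w$ satisfies $\phi(d_v,d_w)=\lambda$, and since $\phi(d_v,\cdot)$ is strictly monotone all neighbors of $v$ share one common degree $d_v^{\ast}$, the unique solution of $\phi(d_v,d_v^{\ast})=\lambda$. By symmetry of $\phi$, the assignment $d\mapsto d^{\ast}$ is an involution. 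I then propagate through $C$: starting from a vertex of degree $a$, all its neighbors have degree $b:=a^{\ast}$, all of their neighbors have degree $a$, and so on, so by connectedness every vertex of $C$ has degree in $\{a,b\}$ and every edge of $C$ joins a degree-$a$ vertex to a degree-$b$ vertex. If $a=b$ then $C$ is regular; if $a\neq b$ then splitting $V(C)$ into its degree-$a$ and degree-$b$ vertices displays $C$ as a bipartite graph that is $(a,b)$-biregular.

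For the second item, one direction is precisely the first item applied to $G$ viewed as a single component, yielding ``regular or biregular.'' For the converse I would just evaluate $\phi$ along an edge: if $G$ is $r$-regular then every edge gives $\phi(d_u,d_v)=r^2(2r)^2=4r^4$, while if $G$ is $(a,b)$-biregular then every edge joins degrees $a$ and $b$, so $\phi(d_u,d_v)=ab(a+b)^2$; in either case $\phi$ is constant on $E(G)$.

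The only step demanding real care is the propagation in the first item, where I must ensure that exactly two degree values occur and that they are globally consistent over $C$ rather than merely locally. This is dispatched by the fact that $d\mapsto d^{\ast}$ is an involution (preventing any third degree from entering) together with connectedness; the analytic input and the converse computation are routine.
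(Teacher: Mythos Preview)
Your proof is correct and follows essentially the same approach as the paper: both hinge on the observation that $\phi(x,y)=xy(x+y)^2$ is strictly increasing in each variable, forcing all neighbors of a given vertex to share a common degree. You supply more detail than the paper does (the explicit derivative, the involution $d\mapsto d^{\ast}$, the propagation argument, and the converse computation), but the underlying strategy is identical.
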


\begin{proof}
Assume that there exists a constant $\l$ such that $d_u d_v (d_u + d_v)^2 = \l$ for every $uv\in E(G)$.
Since the function $f:[0,\infty)\times [0,\infty) \rightarrow \RR$ defined as $f(x,y)=xy (x + y)^2$
is strictly increasing in $y$ for each fixed $x$,
given any vertex $u\in V(G)$, every neighbor of $u$ has the same degree.
Hence, each connected component of $G$ is either regular or biregular.
Furthermore, if $G$ is connected, then $d_u d_v (d_u + d_v)^2 = \l$ for every $uv\in E(G)$ if and only if $G$ is regular or biregular.
\end{proof}

\begin{example} It may be wondered if there exist two different pairs of natural numbers $a,b$ and $c,d$ such that $ab(a+b)^2=cd(c+d)^2$. The answer is affirmative and such pairs of numbers can be obtained as follows.

First let us choose two Pythagorean triples: $\a_1,\b_1,\g_1$ and $\a_2,\b_2,\g_2$ with $\a_1\b_1=\a_2\b_2$ (e.g., $12,35,37$ and $20,21,29$) 
and let $a=\g_2\a_1^2$, $b=\g_2\b_1^2$, $c=\g_1\a_2^2$ and $d=\g_1\b_2^2$. 
Then, notice that
\[\g_2\a_1^2 \g_2\b_1^2(\g_2\a_1^2 + \g_2\b_1^2)^2= \a_1^2\b_1^2\g_1^4\g_2^4 =\lambda,\]
and
\[\g_1\a_2^2 \g_1\b_2^2(\g_1\a_2^2 + \g_1\b_2^2)^2= \a_2^2\b_2^2\g_1^4\g_2^4 = \a_1^2\b_1^2\g_1^4\g_2^4 =\lambda.\]
Therefore, the best characterization of the upper bound in Theorem \ref{t:mh} is the one in Proposition \ref{p:mh}.
\end{example}

In \cite[Theorem 4]{S} appears the inequality
$$
GA_1(G)
\le \sqrt{M_2(G)Z_{-1}(G)} \,.
$$
Note that Theorem \ref{t:mh} improves this upper bound of $GA_1(G)$ since $4d_ud_v \le (d_u+d_v)^2$ gives
$$
4 H_{-2}(G)
= \sum_{uv\in E(G)} \frac{4}{(d_u + d_v)^2}
\le \sum_{uv\in E(G)} \frac{1}{d_u d_v}
= Z_{-1}(G),
$$
and
$2 \sqrt{H_{-2}(G)} \le \sqrt{Z_{-1}(G)}$.

\begin{theorem} \label{t:mm2}
We have for any graph $G$ with minimum degree $\d$, maximum degree $\D$ and $m$ edges
$$
\frac{\d^2 m^2}{ M_2(G)}
\le GA_1(G)
\le \frac{\D^{1/2} (\D + \d\,)^3 m^2}{8 \,\d^{3/2}M_2(G)} \,,
$$
and each equality is attained if and only if $G$ is regular.
\end{theorem}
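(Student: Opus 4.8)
The plan is to run both inequalities through a single pairing of edge weights, namely
$a_{uv}=\big(\tfrac{2\sqrt{d_ud_v}}{d_u+d_v}\big)^{1/2}$ and $b_{uv}=(d_ud_v)^{1/2}$, chosen so that $\sum_{uv}a_{uv}^2=GA_1(G)$ and $\sum_{uv}b_{uv}^2=M_2(G)$, while the cross term is $a_{uv}b_{uv}=\big(\tfrac{2(d_ud_v)^{3/2}}{d_u+d_v}\big)^{1/2}=\sqrt{g(d_u,d_v)}$, where $g(x,y):=\tfrac{2(xy)^{3/2}}{x+y}$. The whole argument then reduces to the two-sided envelope $\delta^2\le g(d_u,d_v)\le\Delta^2$ on the box $[\delta,\Delta]^2$.

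For the lower bound I would apply ordinary Cauchy--Schwarz to this pairing, obtaining $\big(\sum_{uv}\sqrt{g(d_u,d_v)}\big)^2\le GA_1(G)\,M_2(G)$, and then use $g(d_u,d_v)\ge\delta^2$ edgewise so that $\sum_{uv}\sqrt{g(d_u,d_v)}\ge\delta m$. Combining gives $\delta^2m^2\le GA_1(G)M_2(G)$, which is exactly the claimed lower bound. For the upper bound I would feed the same pairing into the reverse (Cassels/P\'olya--Szeg\H{o}) inequality of Lemma \ref{l:PS2}. The ratio $a_{uv}/b_{uv}=\big(\tfrac{2}{(d_u+d_v)\sqrt{d_ud_v}}\big)^{1/2}$ is decreasing in each degree, hence lies in $[1/\Delta,1/\delta]$; taking $\omega=1/\Delta$ and $\Omega=1/\delta$ produces the constant $\tfrac12\big(\sqrt{\Omega/\omega}+\sqrt{\omega/\Omega}\big)=\tfrac{\Delta+\delta}{2\sqrt{\delta\Delta}}$, so $\sqrt{GA_1(G)M_2(G)}\le\tfrac{\Delta+\delta}{2\sqrt{\delta\Delta}}\sum_{uv}\sqrt{g(d_u,d_v)}$. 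Using $g(d_u,d_v)\le\Delta^2$ to bound $\sum_{uv}\sqrt{g(d_u,d_v)}\le\Delta m$ and squaring gives $GA_1(G)\le\tfrac{(\Delta+\delta)^2\Delta}{4\delta}\,\tfrac{m^2}{M_2(G)}$; since $\Delta+\delta\ge2\sqrt{\delta\Delta}$ by AM--GM, this is at most $\tfrac{\Delta^{1/2}(\Delta+\delta)^3}{8\delta^{3/2}}\,\tfrac{m^2}{M_2(G)}$, the stated bound (in fact the pairing already yields the slightly stronger constant $\tfrac{(\Delta+\delta)^2\Delta}{4\delta}$, and the last AM--GM step only serves to recover the printed form).

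The one genuinely computational point, and the step I expect to be the main obstacle, is establishing the envelope $\delta^2\le g\le\Delta^2$ together with its equality cases. A direct differentiation gives $\partial_x g=x^{1/2}y^{3/2}(x+y)^{-2}(x+3y)>0$ and, symmetrically, $\partial_y g>0$, so $g$ is strictly increasing in each variable on $[\delta,\Delta]^2$; thus its minimum $\delta^2$ is attained only at $(\delta,\delta)$ and its maximum $\Delta^2$ only at $(\Delta,\Delta)$. This monotonicity also settles the equalities. In the lower bound, sharpness forces $g(d_u,d_v)=\delta^2$ on every edge, i.e. every edge joins two vertices of degree $\delta$; since a vertex of degree $\Delta>\delta$ is incident to some edge, this is impossible unless $\Delta=\delta$, so $G$ is regular. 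In the upper bound, sharpness forces the final AM--GM step $\Delta+\delta\ge2\sqrt{\delta\Delta}$ to be an equality, equivalently $\omega=\Omega$ in Lemma \ref{l:PS2}, i.e. $\delta=\Delta$, again forcing $G$ regular. Conversely, for a regular graph both extremes collapse to $GA_1(G)=m$, so equality holds in each inequality precisely when $G$ is regular.
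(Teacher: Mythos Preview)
Your argument is correct and in fact slightly sharper than the paper's. Both proofs pair $GA_1(G)$ with $M_2(G)$ via Cauchy--Schwarz and Lemma~\ref{l:PS2}, but with different splittings. The paper takes $a_{uv}=\dfrac{2\sqrt{d_ud_v}}{d_u+d_v}$ and $b_{uv}=\sqrt{d_ud_v}$, so that $\sum a_{uv}b_{uv}=\sum\dfrac{2d_ud_v}{d_u+d_v}$ is controlled by Lemma~\ref{l:h}; this produces $\sum a_{uv}^2$ rather than $GA_1(G)$, and an additional appeal to Corollary~\ref{cor} (namely $\dfrac{2\sqrt{\D\d}}{\D+\d}\le \dfrac{2\sqrt{d_ud_v}}{d_u+d_v}\le 1$) is needed to pass from $\sum a_{uv}^2$ to $GA_1(G)$. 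Your choice $a_{uv}=\big(\dfrac{2\sqrt{d_ud_v}}{d_u+d_v}\big)^{1/2}$, $b_{uv}=(d_ud_v)^{1/2}$ avoids this extra step, since $\sum a_{uv}^2=GA_1(G)$ directly, and the cross term is governed by the monotone function $g(x,y)=\dfrac{2(xy)^{3/2}}{x+y}$ with range $[\d^2,\D^2]$ on $[\d,\D]^2$. As you note, this route yields the tighter upper constant $\dfrac{(\D+\d)^2\D}{4\d}$, from which the paper's $\dfrac{\D^{1/2}(\D+\d)^3}{8\d^{3/2}}$ follows by the AM--GM inequality $\D+\d\ge 2\sqrt{\D\d}$. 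The equality analysis is also handled correctly: in the lower bound, sharpness in $\sum\sqrt{g}\ge\d m$ forces $d_u=d_v=\d$ on every edge; in the upper bound, the final AM--GM step (equivalently $\omega=\Omega$ in Lemma~\ref{l:PS2}) forces $\d=\D$.
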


\begin{proof}
Lemma \ref{l:h}, Cauchy-Schwarz inequality and Corollary \ref{cor} give
$$
\begin{aligned}
\big( \d m \big)^2
& \le
\Big( \sum_{uv\in E(G)} \frac{2d_u d_v}{d_u + d_v} \;\Big)^2
\le \sum_{uv\in E(G)} \Big( \, \frac{2\sqrt{d_u d_v}}{d_u + d_v}\;\Big)^2 \sum_{uv\in E(G)} \Big(\sqrt{d_u d_v}\;\Big)^2
\\
& \le \sum_{uv\in E(G)} \frac{2\sqrt{d_u d_v}}{d_u + d_v} \sum_{uv\in E(G)} d_u d_v
= GA_1(G) M_2(G) .
\end{aligned}
$$

Since
$$
\frac{1}{\D} \le \frac{\frac{2\sqrt{d_u d_v}}{d_u + d_v}}{\sqrt{d_u d_v}} =\frac{2}{d_u + d_v} \le \frac{1}{\d}\,,
$$
Lemmas \ref{l:h}, \ref{l:PS2} and Corollary \ref{cor} give
$$
\begin{aligned}
\big( \D m \big)^2
& \ge
\Big( \sum_{uv\in E(G)} \frac{2d_u d_v}{d_u + d_v} \;\Big)^2
\ge \frac{\sum_{uv\in E(G)} \Big( \, \frac{2\sqrt{d_u d_v}}{d_u + d_v}\;\Big)^2 \sum_{uv\in E(G)} \Big(\sqrt{d_u d_v}\;\Big)^2}{\frac14 \Big(\sqrt{\frac{\D}{\d}}+\sqrt{\frac{\d}{\D}}\, \Big)^2}
\\
& \ge \frac{\frac{2\sqrt{\D\d}}{\D + \d}\sum_{uv\in E(G)} \frac{2\sqrt{d_u d_v}}{d_u + d_v} \sum_{uv\in E(G)} d_u d_v}{\frac14 \Big(\sqrt{\frac{\D}{\d}}+\sqrt{\frac{\d}{\D}}\, \Big)^2}
= \frac{8\,(\D\d)^{3/2}GA_1(G) M_2(G)}{(\D + \d\,)^3}\, .
\end{aligned}
$$

If the graph is regular, then the lower and upper bounds are the same, and they are equal to $GA_1(G)$.

By Lemma \ref{l:h}, if a bound is attained, then we have either
$d_u=d_v = \d$ for every $uv\in E(G)$ or $d_u=d_v = \D$ for every $uv\in E(G)$, and we conclude that $d_u = d_v$ for every $u,v\in V(G)$.
\end{proof}

Note that Theorem \ref{t:mm2} improves the bounds in Corollary \ref{c:lb18}, since
$$
\frac{\d^{3}}{\D}
\le \d^{2},
\qquad
\frac{\D^{1/2} (\D + \d\,)^3}{8 \,\d^{3/2}}
\le \frac{\D(\D^{2}+\d^{2})^2}{4\, \d^{3}} \,,
$$
where the second inequality follows from
$$
\begin{aligned}
(s-1)(2s^8+2s^7+2s^6+s^5 & +5s^4+2s^3+2s^2-s+1)\ge 0  \qquad \text{for } \, s\ge 1,
\\
2s^9-s^6+4s^5-3s^4-3s^2 & +2s-1 \ge 0  \qquad \text{for } \, s\ge 1,
\\
(s^2+1)^3 \le & \; 2s(s^4+1)^2  \qquad \text{for } \, s\ge 1,
\\
(t+1)^3 \le & \; 2\sqrt{t}\,(t^2+1)^2  \qquad \text{for } \, t\ge 1,
\\
 \d^{3/2}(\D + \d\,)^3
& \le 2\D^{1/2}(\D^{2}+\d^{2})^2 \qquad \text{taking } \, t=\frac{\Delta}{\delta}.
\end{aligned}
$$

\begin{theorem} \label{t:hz}
We have for any graph $G$ with minimum degree $\d$ and maximum degree $\D$
$$
\frac{H(G)^2}{ Z_{-1}(G)}
\le GA_1(G)
\le \frac{(\D + \d\,)^3H(G)^2}{8\,(\D\d)^{3/2}Z_{-1}(G)} \,,
$$
and each inequality is attained if and only if $G$ is regular.
\end{theorem}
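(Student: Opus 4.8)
The plan is to follow exactly the template of Theorem \ref{t:mm2}, but reading the harmonic index $H(G)=\sum_{uv\in E(G)}\frac{2}{d_u+d_v}$ and $Z_{-1}(G)=\sum_{uv\in E(G)}\frac{1}{d_u d_v}$ through the factorization
$$
\frac{2}{d_u+d_v}=\frac{2\sqrt{d_u d_v}}{d_u+d_v}\cdot\frac{1}{\sqrt{d_u d_v}}.
$$
Here the first factor is precisely the summand of $GA_1(G)$, while the square of the second factor is the summand of $Z_{-1}(G)$. Thus I would set $a_{uv}=\frac{2\sqrt{d_u d_v}}{d_u+d_v}$ and $b_{uv}=\frac{1}{\sqrt{d_u d_v}}$, so that $\sum a_{uv}b_{uv}=H(G)$, $\sum b_{uv}^2=Z_{-1}(G)$, and $a_{uv}/b_{uv}=\frac{2d_u d_v}{d_u+d_v}=h(d_u,d_v)$, which by Lemma \ref{l:h} lies in $[\d,\D]$.

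For the lower bound I would apply the Cauchy-Schwarz inequality directly:
$$
H(G)^2=\Big(\sum_{uv\in E(G)}\frac{2\sqrt{d_u d_v}}{d_u+d_v}\cdot\frac{1}{\sqrt{d_u d_v}}\Big)^2\le\Big(\sum_{uv\in E(G)}\big(\tfrac{2\sqrt{d_u d_v}}{d_u+d_v}\big)^2\Big)\,Z_{-1}(G).
$$
Since $\frac{2\sqrt{d_u d_v}}{d_u+d_v}\le 1$ by Corollary \ref{cor}, each squared summand is dominated by the summand itself, whence $\sum\big(\frac{2\sqrt{d_u d_v}}{d_u+d_v}\big)^2\le GA_1(G)$. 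This gives $H(G)^2\le GA_1(G)Z_{-1}(G)$, i.e.\ the lower bound.

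For the upper bound I would invoke the reverse inequality of Lemma \ref{l:PS2} with the $a_{uv},b_{uv}$ above and $\o=\d$, $\O=\D$, obtaining
$$
\Big(\sum_{uv\in E(G)}\big(\tfrac{2\sqrt{d_u d_v}}{d_u+d_v}\big)^2\Big)\,Z_{-1}(G)\le\tfrac14\Big(\sqrt{\tfrac{\D}{\d}}+\sqrt{\tfrac{\d}{\D}}\,\Big)^2 H(G)^2=\tfrac14\,\tfrac{(\D+\d)^2}{\D\d}\,H(G)^2 .
$$
Then the lower estimate $\frac{2\sqrt{d_u d_v}}{d_u+d_v}\ge\frac{2\sqrt{\D\d}}{\D+\d}$ from Corollary \ref{cor} converts $\sum a_{uv}^2$ into $\frac{2\sqrt{\D\d}}{\D+\d}GA_1(G)$; substituting and solving for $GA_1(G)$ yields the constant $\frac{(\D+\d)^3}{8(\D\d)^{3/2}}$, which is the claimed upper bound.

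Finally, for the equality cases: when $G$ is regular one has $d_u=d_v=\d=\D$ on every edge, all the intermediate inequalities become equalities, and both bounds reduce to $GA_1(G)=H(G)^2/Z_{-1}(G)$. Conversely, equality in the upper bound forces $\d=\D$ at once through the equality clause of Lemma \ref{l:PS2} (the $a_{uv}$ are strictly positive), so $G$ is regular; equality in the lower bound forces, via the Corollary step, $\frac{2\sqrt{d_u d_v}}{d_u+d_v}=1$ on every edge, hence $d_u=d_v$ everywhere, and the Cauchy-Schwarz equality clause then makes this common degree constant over all edges, so $G$ is again regular. I expect the only delicate point to be the bookkeeping of the equality cases—ensuring that the P\'olya--Szeg\H{o} step and the Corollary step are simultaneously tight—together with carrying the constant $\frac{(\D+\d)^3}{8(\D\d)^{3/2}}$ correctly through the algebra; the inequalities themselves are immediate once Lemmas \ref{l:h} and \ref{l:PS2} and Corollary \ref{cor} are in hand.
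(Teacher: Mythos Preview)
Your proof is correct and follows essentially the same approach as the paper: the same factorization $\frac{2}{d_u+d_v}=\frac{2\sqrt{d_u d_v}}{d_u+d_v}\cdot\frac{1}{\sqrt{d_u d_v}}$, the same use of Cauchy--Schwarz plus Corollary~\ref{cor} for the lower bound, the same combination of Lemma~\ref{l:h}, Lemma~\ref{l:PS2}, and Corollary~\ref{cor} for the upper bound, and the same analysis of the equality cases.
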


\begin{proof}
Cauchy-Schwarz inequality and Corollary \ref{cor} give
$$
\begin{aligned}
H(G)^2
& =
\Big( \sum_{uv\in E(G)} \frac{2}{d_u + d_v} \;\Big)^2
\le \sum_{uv\in E(G)} \Big( \, \frac{2\sqrt{d_u d_v}}{d_u + d_v}\;\Big)^2 \sum_{uv\in E(G)} \Big(\frac1{\sqrt{d_u d_v}}\;\Big)^2
\\
& \le \sum_{uv\in E(G)} \frac{2\sqrt{d_u d_v}}{d_u + d_v} \sum_{uv\in E(G)} \frac1{d_u d_v}
= GA_1(G) Z_{-1}(G) .
\end{aligned}
$$

Since Lemma \ref{l:h} implies
$$
\d \le \frac{\frac{2\sqrt{d_u d_v}}{d_u + d_v}}{\frac{1}{\sqrt{d_u d_v}}} =\frac{2 d_u d_v}{d_u + d_v} \le \D,
$$
Lemma \ref{l:PS2} gives
$$
\begin{aligned}
H(G)^2
& =
\Big( \sum_{uv\in E(G)} \frac{2}{d_u + d_v} \;\Big)^2
\ge \frac{\sum_{uv\in E(G)} \Big( \, \frac{2\sqrt{d_u d_v}}{d_u + d_v}\;\Big)^2 \sum_{uv\in E(G)} \Big(\frac{1}{\sqrt{d_u d_v}}\;\Big)^2}{\frac14 \Big(\sqrt{\frac{\D}{\d}}+\sqrt{\frac{\d}{\D}}\, \Big)^2}
\\
& \ge \frac{\frac{2\sqrt{\D\d}}{\D + \d}\sum_{uv\in E(G)} \frac{2\sqrt{d_u d_v}}{d_u + d_v} \sum_{uv\in E(G)} \frac{1}{d_u d_v}}{\frac14 \Big(\sqrt{\frac{\D}{\d}}+\sqrt{\frac{\d}{\D}}\, \Big)^2}
= \frac{8\,(\D\d)^{3/2}GA_1(G) Z_{-1}(G)}{(\D + \d\,)^3}\, .
\end{aligned}
$$

If the graph is regular, then the lower and upper bounds are the same, and they are equal to $GA_1(G)$.

By Lemma \ref{l:PS2}, if the upper bound is attained, then $\D=\d$ and $G$ is regular.

If the lower bound is attained, then Corollary \ref{cor} gives $d_u = d_v$ for every $uv\in E(G)$.
Cauchy-Schwarz inequality gives that there exists a constant $\l$ such that
$$
\frac{2\sqrt{d_u d_v}}{d_u + d_v} = \l \frac1{\sqrt{d_u d_v}}
$$
for every $uv\in E(G)$.
Hence, $d_u = \l$ for every $u\in V(G)$ and $G$ is regular.
\end{proof}

The \emph{forgotten topological index} is defined as $F(G) = \sum_{u\in V(G)} d_u^3 = \sum_{uv\in E(G)} (d_u^2+d_v^2)$ (see \cite{Furtula}).

\begin{theorem} \label{t:forgotten}
We have for any graph $G$ with minimum degree $\d$, maximum degree $\D$ and $m$ edges
$$
2m-\frac{F(G)}{2\d^2}
\le GA_1(G)
\le  2m-\frac{F(G)}{2\D^2}
$$
and each inequality is attained if and only if $G$ is regular.
\end{theorem}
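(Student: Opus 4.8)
The plan is to prove both inequalities pointwise on each edge and then sum. Writing
$$
2m=\sum_{uv\in E(G)}2,\qquad F(G)=\sum_{uv\in E(G)}(d_u^2+d_v^2),\qquad GA_1(G)=\sum_{uv\in E(G)}\frac{2\sqrt{d_u d_v}}{d_u+d_v},
$$
it suffices to establish, for every edge $uv\in E(G)$, the two bounds
$$
2-\frac{d_u^2+d_v^2}{2\d^2}\le \frac{2\sqrt{d_u d_v}}{d_u+d_v}\le 2-\frac{d_u^2+d_v^2}{2\D^2},
$$
and add these over the $m$ edges.

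For the upper bound I would rewrite the claim as
$$
\frac{d_u^2+d_v^2}{2\D^2}+\frac{2\sqrt{d_u d_v}}{d_u+d_v}\le 2.
$$
Since $d_u,d_v\le\D$ gives $d_u^2+d_v^2\le 2\D^2$, the first summand is at most $1$; by Corollary \ref{cor} the second summand $\frac{2\sqrt{d_u d_v}}{d_u+d_v}$ is also at most $1$. Adding the two estimates yields the bound, and equality forces both summands to equal $1$, i.e. $d_u=d_v=\D$.

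The lower bound is the only real work, because the analogous rearrangement $\frac{d_u^2+d_v^2}{2\d^2}+\frac{2\sqrt{d_u d_v}}{d_u+d_v}\ge 2$ compares a term that is $\ge 1$ with one that is $\le 1$, so no cheap sandwich is available. I would first use $\d\le\min\{d_u,d_v\}$ to reduce to the stronger pointwise inequality
$$
2-\frac{2\sqrt{d_u d_v}}{d_u+d_v}\le \frac{d_u^2+d_v^2}{2\min\{d_u,d_v\}^2}.
$$
Assuming $d_v\ge d_u$ and setting $t=d_v/d_u\ge 1$, this becomes $2-\frac{2\sqrt t}{1+t}\le\frac{1+t^2}{2}$. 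Clearing the positive denominator $2(1+t)$ and substituting $s=\sqrt t\ge 1$ turns it into $s^6+s^4-3s^2+4s-3\ge 0$, and the factorization
$$
s^6+s^4-3s^2+4s-3=(s-1)\big(s^5+s^4+2s^3+2s^2-s+3\big)
$$
settles it: for $s\ge 1$ the second factor is positive (group $s^5-s\ge 0$ with the remaining positive terms), so the product is nonnegative and vanishes only at $s=1$.

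The main obstacle is exactly this last estimate; the rest is bookkeeping. For equality, note that in the lower-bound reduction the step $\d\le\min\{d_u,d_v\}$ is tight iff $\min\{d_u,d_v\}=\d$ and the one-variable inequality is tight iff $s=1$, i.e. $d_u=d_v$; together these force $d_u=d_v=\d$. Summing, equality in the global lower (respectively upper) bound requires $d_u=d_v=\d$ (respectively $d_u=d_v=\D$) on every edge, and since every vertex has degree at least $\d\ge 1$ and hence lies on an edge, all degrees coincide, i.e. $G$ is regular. Conversely, if $G$ is regular then $\d=\D$ and a direct substitution shows both bounds reduce to $GA_1(G)=m$, so each inequality is indeed attained.
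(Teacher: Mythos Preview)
Your proof is correct. The upper-bound argument (bounding each summand by $1$) coincides with the paper's. For the lower bound, however, the paper takes a different and shorter route: it starts from the algebraic identity
\[
\Big(\frac{2\sqrt{d_ud_v}}{d_u+d_v}\Big)^{2}+\frac{2(d_u^2+d_v^2)}{(d_u+d_v)^2}=2,
\]
and then uses $x^2\le x$ for $x=\frac{2\sqrt{d_ud_v}}{d_u+d_v}\in[0,1]$ together with $(d_u+d_v)^2\ge 4\d^2$ to obtain $\frac{2\sqrt{d_ud_v}}{d_u+d_v}+\frac{d_u^2+d_v^2}{2\d^2}\ge 2$ in one line. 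Your approach instead reduces to the one-variable inequality in $t=d_v/d_u$ and verifies it via the factorization $s^6+s^4-3s^2+4s-3=(s-1)(s^5+s^4+2s^3+2s^2-s+3)$ with $s=\sqrt{t}$. The paper's identity trick is cleaner and avoids any polynomial calculation; on the other hand, your argument actually establishes the slightly stronger edgewise bound with $\min\{d_u,d_v\}$ in place of $\d$, and is entirely self-contained (it does not rely on spotting the identity). The equality analyses agree.
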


\begin{proof}
The equality
$$
\frac{2\sqrt{d_u d_v}}{d_u + d_v} \frac{2\sqrt{d_u d_v}}{d_u + d_v} +
\frac{2(d_u^2 + d_v^2)}{(d_u + d_v)^2} = 2
$$
and Corollary \ref{cor} give
$$
\begin{aligned}
\frac{2\sqrt{d_u d_v}}{d_u + d_v} + \frac{d_u^2 + d_v^2}{2\d^2} & \ge 2,
\\
GA_1(G) + \frac{F(G)}{2\d^2} & \ge 2m .
\end{aligned}
$$
We also have
$$
\begin{aligned}
\frac{2\sqrt{d_u d_v}}{d_u + d_v} + \frac{d_u^2 + d_v^2}{2\D^2} & \le 2,
\\
GA_1(G) + \frac{F(G)}{2\D^2} & \le 2m .
\end{aligned}
$$

If the graph is regular, then the lower and upper bounds are the same, and they are equal to $GA_1(G)$.
If a bound is attained, then we have either
$d_u+d_v = 2\d$ for every $uv\in E(G)$ or $d_u^2+d_v^2 = 2\D^2$ for every $uv\in E(G)$ and we conclude that $d_u = d_v$ for every $u,v\in V(G)$.
\end{proof}

\end{document}